\numberwithin{equation}{section}
\newcommand{\R}{\mathbb {R}}
\newtheorem{theorem}{Theorem}[section]
\newtheorem{prop}[theorem]{Proposition}
\newtheorem{lemma}[theorem]{Lemma}
\newtheorem{corollary}[theorem]{Corollary}
\theoremstyle{definition}
\newtheorem{definition}{Definition}[section]
\newtheorem{remark}[theorem]{Remark}
\newcommand{\beq}{\begin{equation}}
\newcommand{\eeq}{\end{equation}}
\newcommand{\beqq}{\begin{equation*}}
\newcommand{\eeqq}{\end{equation*}}
\theoremstyle{remark}
\newtheorem{example}{Example}[section]
\newcommand{\Extend}[5]{\ext@arrow0099{\arrowfill@#1#2#3}{#4}{#5}}
\newcommand{\ft}[1]{\mathcal F{#1}}
\newcommand{\ift}[1]{\mathcal F^{-1}{#1}}
\newcommand{\ftp}[1]{\widetilde{\mathcal F}{#1}}
\begin{document}
\hspace{1em}
\title[A L.A.P. for high-order Schr\"odinger operators in critical spaces]{A Limiting absorption principle for high-order Schr\"odinger operators in  critical spaces}
\author{Xiaoyan Su}
\address{Xiaoyan Su, Laboratory of Mathematics and Complex Systems (Ministry of Education),
 School of Mathematical Sciences,
 Beijing Normal University,
 Beijing, 100875, People's Republic of China.}
\curraddr{}
\email{suxiaoyan0427@qq.com}

\author{Chengbin Xu}
\address{Chengbin Xu,
 The Graduate School of China Academy of Engineering Physics, P. O. Box 2101,
 Beijing, 100088, People's Republic of China.}
\curraddr{}
\email{xcbsph@163.com}

\author{Guixiang Xu}
\address{Guixiang Xu, Laboratory of Mathematics and Complex Systems (Ministry of Education),
 School of Mathematical Sciences,
 Beijing Normal University,
 Beijing, 100875, People's Republic of China.}
\curraddr{}
\email{guixiang@bnu.edu.cn}

\author{Xiaoqing Yu}
\address{Xiaoqing Yu, Laboratory of Mathematics and Complex Systems (Ministry of Education),
 School of Mathematical Sciences,
 Beijing Normal University,
 Beijing, 100875, People's Republic of China.}
\curraddr{}
\email{202021130031@mail.bnu.edu.cn}

\begin{abstract} In this paper, we prove a limiting absorption principle for high-order Schr\"odinger operators with a large class of potentials which generalize some results by A. Ionescu and W. Schlag. 
	Two key tools we use in this paper are the Stein--Tomas theorem in Lorentz spaces and a sharp trace lemma given by S. Agmon and L. H\"ormander%
.
\\[0.6em]
 \textbf{Key Words:} Limiting absorption principle; High-order Schr\"odinger operator, Stein--Tomas theorem, Sharp trace theorem.
\\[0.6em]
\textbf{2020 Mathematics Subject Classification:} Primary 47A10, Secondary 35P05.
\end{abstract}
\maketitle
\section{introduction}\label{section-intro}
 In this paper, we prove a limiting absorption principle for the high-order  Schr\"odinger operators
   \[H= (-\Delta)^{m} + V, \quad m\ge 1, \]
   with a large class of potentials $V$ belonging to certain Banach spaces, which generalize some  results by A. Ionescu and W. Schlag in \cite{I-S}.
   Limiting absorption principles play an important role in the spectral theory as they directly imply the absence of  singular spectrum.
  They are closely related to the spectral measures of the Schr\"odinger operators, and the distorted Fourier transforms, and the long-time behavior of the unitary group $e^{itH}$.

Due to their wide applications, many different limiting absorption principles have been proven over the decades. S. Agmon \cite{Ag} established a limiting absorption principle for elliptic operators
with short-range potentials in weighted $L^2$ spaces using the trace lemma.
%
  T. Ikebe and Y. Saito in \cite{Ike-Saito} proved similar limiting absorption principles for second-order differential operators with magnetic term using \emph{a priori} estimates. By introducing a critical space for the trace lemma, L. H\"ormander established a limiting absorption principle for simply characteristic differential operators with generalized short-range potentials in \cite{Hom}.  Some quantitative limiting absorption principles were investigated for Schr\"odinger operators on asymptotically conic manifolds with short-range potentials by I. Rodnianski and T. Tao in \cite{Rod-Tao}.

By the connection between resolvent operators and restriction operators, M. Goldberg and W. Schlag established  a limiting absorption principle  in $L^p$ spaces instead of weighted $L^2$ spaces in \cite{GS}  by the  Stein--Tomas restriction theorem. This new type of
  limiting absorption principle allowed them to handle the Schr\"odinger operators with singular potentials.  However, the class of potentials investigated  is quite restricted in \cite{GS} since the results  rely on the unique continuation results established by A. Ionescu and D. Jerison in \cite{IJ}. In order to avoid using the unique continuation results, A. Ionescu and W. Schlag in \cite{I-S} proved  the asymptotic completeness of Schr\"odinger operators for a class of potentials that include some singular potentials, a ``global" Kato class, as well as some first-order differential operators.  Their main innovation was a new limiting absorption principle in some critical spaces which contains both the  cases in \cite{A-H} and \cite{GS}. This general limiting absorption principle extends the Agmon--Kato--Kuroda theorem  to a large class of perturbations.  See also \cite{Huang-Yao-Zheng} for more related results.

{The goal of this paper is to establish a limiting  absorption principle for high-order Schr\"odinger operators
 in some critical spaces inspired by the work of A. Ionescu and  W. Schlag in  \cite{I-S}.
  In particular, we also extend some results in \cite{I-S} by using the Stein--Tomas restriction theorem in Lorentz spaces instead of Lebesgue spaces.}
    In order to establish our limiting absorption principle, we only need to consider the boundary operators for the resolvents using the  maximum principle for analytic functions.
The boundary operator is closely related to the Fourier restriction operators: the imaginary part of the boundary operator is the restriction operator. This relationship allows us to prove the required estimates in the critical spaces by using
  the Stein--Tomas restriction theorem and the sharp trace lemma.

The critical spaces we use in our paper allow us to include more general potentials, however we are unable to exclude the embedded eigenvalues of the corresponding operators. It therefore seems likely that other new techniques are required to attain this goal.
We remark that this has been achieved for some special kinds of Schr\"odinger operators; for instance, T. Kato  proved the non-existence of positive eigenvalues in \cite{Kato},  which implies the absence of embedded eigenvalues for the Schr\"odinger operators with short-range potentials.
In \cite{IJ}, A. Ionescu and D. Jerison  used unique continuation properties to exclude embedded eigenvalues for the Schr\"odinger operators with certain singular potentials.
H. Koch and D. Tataru  excluded the positive eigenvalues for Schr\"odinger operators with more general potentials in \cite{KT}.  We leave this topic for further discussion.

\subsection{Notations}\label{section-prelim}
In this subsection, we explain some notations used in this paper. Let $d\geq 2$ be the dimension and $m$ be any integer larger than zero.

 We set $P_m(\xi)=|\xi|^{2m}$  so that $P_m(D)=(-\Delta)^m$ is the $m$th-order Laplacian (which is a differential operator of order $2m$).
Let  $R_0^m$ and $R^m$ denote the resolvents for $P_m (D)$ and $P_m (D)+V$ respectively.


For $r>0$, we write $\mathbb S_r ^{d-1}=\{\xi: P_m(\xi)=r^{2m}\}$, and write $d\sigma_r $ for the surface measure on $\mathbb S_r ^{d-1}$. In order to simplify notation, we will often reason with $\lambda>0$ instead of $r$, where the two are related by
\begin{align}
    r(\lambda)=\lambda^{\frac{1}{2m}}.\label{notation-r-lambda}
\end{align}
Let $\mathcal{B}(X, Y)$ be the set of bounded linear operators $T:X\to Y$ between Banach spaces $X$ and $Y$.

For any $\alpha \in {\mathbb C }$, let $S_{\alpha}: \mathcal{S}'(\mathbb R^d)\rightarrow \mathcal{S}'(\mathbb R^d)$ denote the Fourier multiplier operators given by
$$S_\alpha u(x)=\{(1+|\xi|^2)^{\alpha/2}\hat u(\xi)\}^{\vee}(x).
$$
For $1<p<\infty$ and $\alpha\in \mathbb R$, we define the following spaces
\begin{align*}
	W_\alpha ^{p}(\mathbb R^d)&=\{u\in\mathcal{S}'(\mathbb R^d): S_{\alpha}u \in L^{p}(\mathbb R^d)\},\\
	W_{\alpha, \rm{loc}} ^{p}(\mathbb R^d)&=\{u\in\mathcal{S}'(\mathbb R^d):S_{\alpha}u \in L_{\rm{loc}}^{p}(\mathbb R^d)\},\  \text{and}   \\
	W_\alpha ^{p,q }(\mathbb R^d)&=\{u\in\mathcal{S}'(\mathbb R^d):S_{\alpha}u \in L^{p, q}(\mathbb R^d)\},
\end{align*}
where $L^{p, q}(\mathbb R^d)$ are the Lorentz spaces (see \cite{Stein-Weiss}).

Let $p_d= (2d+2)/(d+3)$ and $p_d'=(2d+2)/(d-1)$ denote the Stein--Tomas restriction exponents.
The Stein--Tomas restriction theorem in Lorentz spaces, which states that
$	\|\ft{ f}\|_{L^2{(\mathbb S_{r}^{d-1}})} \leq C_r \|f\|_{L^{p_d, 2}(\mathbb R^d)}$, is  due to Bak and Seeger \cite{BS}. Here,  $\ft{f}(
\xi)=\int_{\mathbb R^n} f(x) e^{i\xi \cdot x} dx$ is the Fourier transform.

 Another important result related to the restriction operator is given by the trace lemma (see \cite{Ag}).
The trace lemma asserts that if a function belongs to the weighted  space $L^2_s(\mathbb R^d)$ with $s>1/2$, then its Fourier transform restricted to a sphere belongs to $L^2(d\sigma)$ (see \cite{Ag}). It is not true for $s=\frac{1}{2}$. However a critical space $B$ that was introduced by S. Agmon and L. H\"ormander in \cite{A-H}  is a good replacement for $L^{2}_{ 1/2}(\mathbb R^d)$ in the sense that
$$\mathcal{F}:B\rightarrow L^2(\mathbb {S}^{d-1})\ \   \text{and}  \ \  \mathcal{F}^{-1}: L^2(\mathbb {S}^{d-1})\rightarrow B^*,$$ as bounded operators.

The space $B$ and its dual $B^*$ is defined as follows. Let $D_j=\{x\in\mathbb R^d: 2^{j-1}\leq |x|\leq 2^j\}$, for $\ j\geq1$, and $D_0=\{x\in\mathbb R^d: |x|\le 1\}$.  Then
\begin{align*}B&=\bigg\{f:\mathbb R^d\rightarrow\mathbb C\ \bigg|\ \|f\|_{B}\coloneqq\sum_{j=0}^{\infty}2^{j/2}\|f\|_{L^2(D_j)}<\infty\bigg\},\\
\shortintertext{and}
B^*&=\bigg\{f:\mathbb R^d\rightarrow\mathbb C\ \bigg|\  \|f\|_{B^*}\coloneqq\sup_{j\geq0}2^{-j/2}\|f\|_{L^2(D_j)}<\infty\bigg\}.
\end{align*}
From Theorem 14.1.2 in \cite{Hom}, we have $B \hookrightarrow L^1 (\mathbb R; L^2(\mathbb R^{d-1})) $,
\begin{align}\label{embedded B}
	\int_{-\infty}^\infty \|f(\cdot , x_d)\|_{L^2(\mathbb R^{d-1})} dx_d \leq \sqrt{2} \|f\|_B.
\end{align}
It follows by duality that $L^\infty(\mathbb R; L^2(\mathbb R^{d-1})) \hookrightarrow B^*$, i.e.
\begin{align}\label{dual embedding}
	\|f\|_{B^*} \leq \sqrt{2} \sup_{x_d \in \mathbb R} \|f(x',x_d)\|_{L^2_{x'}}.
\end{align}

Let $S_\alpha(B)$ and $S_\alpha(B^*)$ denote the image of $B$ and $B^*$ under $S_\alpha$ respectively.
As we mentioned before, we will prove a limiting absorption principle for $P_m(D)$ in some critical Banach spaces in $\mathbb R^d$ with $d\geq 2$. These spaces are
\begin{align*}
    X \coloneqq W_{-\theta_{m,d}}^{p_d, 2}(\mathbb R^d)+S_m(B)\ \  \text{and}\ \  X^*& \coloneqq W_{\theta_{m,d}}^{p_d', 2}(\mathbb R^d)\cap S_{-m}(B^*),
\end{align*}
where $\theta_{m,d}=m-\frac{d}{d+1}$.
Using  the Sobolev embedding theorem and the fact that
$B\hookrightarrow L^2(\mathbb R^d) \hookrightarrow B^* \hookrightarrow L^2_{loc}(\mathbb R^d)$, it is easy to get the following embeddings,
\begin{align}\label{embedding result}
	X\hookrightarrow W_{-m}^{2},\quad W_{m}^2\hookrightarrow X^*\hookrightarrow W_{m,\rm{loc}}^2.
\end{align}
By  sharp trace theorem  and Stein-Tomas theorem, we have
$$\mathcal{F}:X\rightarrow L^2(\mathbb {S}^{d-1})\ \  \text{and}\ \  \mathcal{F}^{-1}: L^2(\mathbb {S}^{d-1})\rightarrow X^*$$
as bounded operators.

\subsection{Main results} The first main result in this paper is the following limiting absorption principle for the high-order Laplacians:
 \begin{theorem}\label{Main theorem -1}
Assume that $\delta \in (0, 1]$. Then
\begin{align}\label{LAP}
	\sup_{\lambda\in [\delta, \delta^{-1}], \ \varepsilon \in [0,1]}\|R^m_0(\lambda\pm i\varepsilon )\|_{X\to X^{*}}
	\leq C_{\delta,d}<\infty.
\end{align}

\end{theorem}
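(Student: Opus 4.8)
The plan is to reduce the estimate, by a scaling argument together with the maximum principle for analytic operator-valued functions in the spirit of \cite{I-S}, to a bound for the boundary operators $R_0^m(\la\pm i0)$, and then to read the latter off the Stein--Tomas theorem in Lorentz spaces and the sharp trace lemma. Since $P_m(D)=(-\Delta)^m$ is homogeneous of degree $2m$, a dilation conjugates $R_0^m(\la\pm i\ep)$ into $\la^{-1}$ times $R_0^m(1\pm i\ep/\la)$ sandwiched between dilations; for $\la\in[\delta,\delta^{-1}]$ these dilations act boundedly on $X$ and on $X^*$ with norms controlled by $\delta$ and $d$, so it suffices to treat $\la=1$. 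When $\ep$ is bounded away from $0$, the symbol $(|\xi|^{2m}-1-i\ep)^{-1}$ is a Fourier multiplier of order $-2m$ with Mikhlin bounds depending only on a lower bound for $\ep$ and on $d$; hence $R_0^m(1\pm i\ep):W_{-m}^2\to W_m^2$, and therefore $X\to X^*$ by \eqref{embedding result}. Thus the critical regime is $\ep\downarrow0$, and using the analyticity of $z\mapsto R_0^m(z)$ off $[0,\infty)$, the elementary bounds just described, and a Phragm\'en--Lindel\"of argument as in \cite{I-S}, \eqref{LAP} follows once we show that the boundary operator $R_0^m(1\pm i0)$ --- the Fourier multiplier with symbol $(|\xi|^{2m}-1\mp i0)^{-1}$ defined via the Sokhotski--Plemelj formula --- maps $X$ into $X^*$. (The $-i0$ case follows from the $+i0$ case by complex conjugation, since $X$ and $X^*$ are stable under conjugation.)

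Fix $\chi\in C_c^\infty(\R^d)$ equal to $1$ near the unit sphere. The ``far'' part $(1-\chi(\xi))(|\xi|^{2m}-1)^{-1}$ is a genuine smooth symbol of order $-2m$, so its multiplier maps $W_{-m}^2\to W_m^2$, hence $X\to X^*$ by \eqref{embedding result}. For the ``near'' part, factor $|\xi|^{2m}-1=(|\xi|-1)\,q(|\xi|)$ with $q(\rho)=\sum_{k=0}^{2m-1}\rho^k$ smooth and $\ge1$; then $\chi(\xi)(|\xi|^{2m}-1\mp i0)^{-1}=\bigl(\chi(\xi)/q(|\xi|)\bigr)(|\xi|-1\mp i0)^{-1}$, so, $\chi/q$ being in $C_c^\infty(\R^d)$, it is enough to bound, for every $\psi\in C_c^\infty(\R^d)$ supported near the unit sphere, the model operator $M_\pm$ with symbol $\psi(\xi)(|\xi|-1\mp i0)^{-1}$ --- i.e.\ the second-order free resolvent localized near $\mathbb S_1^{d-1}$, which puts us exactly in the framework of \cite{A-H,I-S}.

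By Sokhotski--Plemelj the symbol of $M_\pm$ splits into a principal-value part $\psi(\xi)\,\mathrm{p.v.}\,(|\xi|-1)^{-1}$ and an imaginary part $\pm i\pi\,\psi\,d\sigma_1$, where $d\sigma_1$ is the surface measure on $\mathbb S_1^{d-1}$. The imaginary part is, up to a positive constant, the restriction--extension operator $f\mapsto \ift{\bigl(\psi\,\widehat f\,d\sigma_1\bigr)}$, which factors through $L^2(\mathbb S^{d-1})$ via the Fourier transform and its inverse; thus its boundedness $X\to X^*$ is precisely the displayed mapping properties $\ft{}:X\to L^2(\mathbb S^{d-1})$ and $\ift{}:L^2(\mathbb S^{d-1})\to X^*$, which combine the Lorentz-space Stein--Tomas theorem of Bak--Seeger \cite{BS} (for the $W_{-\theta_{m,d}}^{p_d,2}$, resp.\ $W_{\theta_{m,d}}^{p_d',2}$, components) with the sharp trace lemma (for the $S_m(B)$, resp.\ $S_{-m}(B^*)$, components). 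For the principal-value part, boundedness from $S_m(B)$ to $S_{-m}(B^*)$ is the Agmon--H\"ormander trace lemma for the free resolvent \cite{A-H}; the mapping between the $W^{p_d,2}$-type components, and the mixed mappings between the two families, are obtained by the same scheme adapted to the Lorentz--Sobolev spaces: decompose $\psi(\xi)(|\xi|-1)^{-1}=\sum_{j\ge0}\beta_j(\xi)$ with $\supp\beta_j\subset\{\,||\xi|-1|\sim 2^{-j}\}$ and $|\beta_j|\les 2^j$, estimate each block by the Bak--Seeger restriction bound on the thin shells $\mathbb S_{1+t}^{d-1}$ (the shell thickness $2^{-j}$ absorbing the amplitude $2^j$), and sum the blocks using the dyadic structure of $B$, $B^*$ and the Lorentz exponent $2$. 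Collecting the far piece and the imaginary and principal-value near pieces gives $\|R_0^m(1\pm i0)\|_{X\to X^*}\les_d 1$, and undoing the scaling yields \eqref{LAP}.

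The main obstacle is the principal-value part of the near-sphere model operator. Unlike its imaginary part it is not a single application of the restriction theorem, and the estimate of each dyadic block is uniform in the scale $2^{-j}$, so one cannot merely sum absolute values; one must genuinely use the fine structure of the critical spaces --- the $\ell^1$, resp.\ $\ell^\infty$, dyadic summation defining $B$, resp.\ $B^*$, and the Lorentz index $2$ in $W_{\pm\theta_{m,d}}^{p_d,2}$ --- to obtain a bound uniform across all scales of proximity to the sphere. It is here that both the Agmon--H\"ormander trace lemma and the Bak--Seeger refinement of Stein--Tomas are used in an essential way. A secondary point is to make the reduction to the boundary operators rigorous: one must establish the analyticity and the convergence $R_0^m(\la\pm i\ep)\to R_0^m(\la\pm i0)$ in a topology strong enough to run the maximum principle, uniformly for $\la$ in the compact window $[\delta,\delta^{-1}]$.
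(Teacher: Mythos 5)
Your overall skeleton (maximum-principle reduction to the boundary operators, far/near splitting with the far part handled as a smooth multiplier $W^2_{-m}\to W^2_m$, and the near part tied to restriction estimates via Bak--Seeger and the Agmon--H\"ormander space) matches the paper, and the reduction of the near symbol to $\psi(\xi)(|\xi|-1\mp i0)^{-1}$ via the radial factorization $|\xi|^{2m}-1=(|\xi|-1)q(|\xi|)$ is legitimate for $P_m(\xi)=|\xi|^{2m}$ (though it forfeits the paper's remark that the argument covers general non-degenerate homogeneous symbols, for which the paper instead uses the graph parametrization $\xi_d=\varphi_\lambda(\xi')$). The Plemelj split is also fine as far as it goes: the delta part is exactly the restriction--extension operator and is bounded $X\to X^*$ by the stated mapping properties of $\mathcal F$ and $\mathcal F^{-1}$, and the $B\to B^*$ bound for what remains can indeed be quoted from \cite{A-H}.

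The genuine gap is your treatment of the principal-value part on the Lorentz--Sobolev components, i.e.\ the $L^{p_d,2}\to L^{p_d',2}$ bound and the mixed bounds. Your scheme --- decompose $\mathrm{p.v.}\,\psi(\xi)(|\xi|-1)^{-1}=\sum_j\beta_j$ into dyadic shells of thickness $2^{-j}$, bound each block by Stein--Tomas on the thin shell (the thickness absorbing the amplitude $2^j$), and ``sum the blocks using the dyadic structure of $B$, $B^*$ and the Lorentz exponent $2$'' --- does not close: each block obeys only an $O(1)$ bound uniform in $j$, there are infinitely many blocks, and for the $L^{p_d,2}\to L^{p_d',2}$ estimate no $B$/$B^*$ dyadic structure is even available, while the Lorentz second index $2$ by itself provides no summation mechanism. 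You acknowledge that one ``cannot merely sum absolute values,'' but you never identify the cancellation or alternative mechanism that makes the sum finite, and this is precisely the heart of the theorem. The paper supplies two concrete mechanisms that your proposal lacks: (i) it never separates p.v.\ and delta parts, but keeps the full Cauchy symbol $(\xi_d-\varphi_\lambda(\xi')-i0)^{-1}Q_\lambda(\xi')$, whose inverse Fourier transform is the \emph{bounded} half-space kernel \eqref{first-expr-for-K+} with the Heaviside factor $H(x_d)$; freezing $x_d$ and setting $h_{x_d}=iH(x_d-y_d)f$ turns the operator into a single extension--restriction composition on the fixed surface, uniformly in $x_d$ (this is how the $L^{p_d,2}\to B^*$ and, by duality, $B\to L^{p_d',2}$ bounds are obtained in \eqref{From L^{p_d,2} to B^*}); and (ii) for $L^{p_d,2}\to L^{p_d',2}$ it uses the pointwise kernel decay $|K_\lambda^+(x)|\lesssim(1+|x|)^{-\frac{d-1}{2}}$ together with a physical-space dyadic decomposition and real interpolation as in Guti\'errez \cite{Gu}. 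Without such an ingredient (or an equivalent one, e.g.\ a uniform Sobolev/oscillatory-integral argument in the style of \cite{KRS}), the frequency-side shell summation you propose simply diverges, so the key estimate \eqref{NL-1} is not established by your argument. The remaining points you flag (justifying the maximum-principle reduction, the scaling to $\lambda=1$) are routine and are treated at the same level of detail in the paper.
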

In order to prove Theorem \ref{Main theorem -1},
we write the resolvent as an integral operator with a Schwartz kernel. By studying this Schwartz kernel, one is able to derive all the required estimates.
 Essentially, modulo a good operator, the resolvent operator is a  restriction operator on a small spherical  shell around a sphere, which leads to estimates similar to those for the restriction operator.
 Using the exact form of the kernel, we are able to reduce our operator to the restriction operator with a weight which is almost constant in the shell.

We emphasize that our method  does not use the fundamental solution of $P_m(D)$. Therefore, we do not impose any upper bounds on $m$.

The second main result is the following properties for the perturbed operators:

\begin{theorem} \label{Main theorem 2}
	 Assume that $V$ is an admissible perturbation (see Definition \ref{admissible definition} below).
	 \begin{enumerate}
	     \item 	 Then
the operator $H_m=P_m(D)+V$ defines a self-adjoint operator on
  $$D(H_m)\coloneqq\{u\in W_m^2(\mathbb R^d): H_m u\in L^2(\mathbb R^d)\}.$$
  In addition, $D(H_m)$ is dense in $L^2(\mathbb R^d)$, and $H_m$ is bounded from below on $D(H_m)$.
\item Let $I \subset (\mathbb R\setminus \{0\}) \setminus \mathcal E$ be compact. Then
	\begin{align}\label{LAP-V}
	\sup_{\lambda\in I, \varepsilon \in (0,1]}\|R^m(\lambda\pm i\varepsilon )\|_{X\to X^{*}}
	\leq C(V, I)<\infty,
\end{align}
where $\mathcal E$ is the set of nonzero discrete eigenvalues of $H_m$. This implies that the spectrum of the operator $H_m$ is purely absolutely continuous on $I$. In particular, $\Omega^{\pm}(H_m, P_m(D))$ exist and are complete, and $\sigma_{sc}(H_m)=\emptyset, \sigma_{ac}(H_m)=[0, \infty)$.
	 \end{enumerate}

 \end{theorem}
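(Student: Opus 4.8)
The plan is to follow the standard Agmon--Kato--Kuroda machinery, but using the critical spaces $X, X^*$ and Theorem \ref{Main theorem -1} in place of the classical weighted-$L^2$ estimates. For part (1), self-adjointness will follow from the Kato--Rellich theorem once we know that $V$, as a multiplication (or first-order differential) operator, is $P_m(D)$-bounded with relative bound strictly less than $1$. Concretely, the definition of \emph{admissible perturbation} should factor $V$ through the pair $X^* \hookleftarrow W_m^2$ and $W_m^2 \leftarrow$ (something like $W_{-m}^2 \hookleftarrow X$), so that for $u \in W_m^2 = D(P_m(D)^{1/2}\text{-ish domain})$ one has $\|Vu\|_{L^2} \lesssim \|u\|_{W_m^2}$ with the low-frequency part giving the small relative bound; since $P_m(D)$ is self-adjoint with domain $W_m^2$ (via the Fourier transform), Kato--Rellich gives self-adjointness of $H_m$ on that domain, density of the domain is automatic, and boundedness from below follows by absorbing the relatively bounded part. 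I would also record here that $\sigma_{ess}(H_m) = \sigma_{ess}(P_m(D)) = [0,\infty)$ by Weyl's theorem, since admissibility should force $V$ to be relatively compact (this is where the critical-space smallness / decay in the definition enters), which is what makes the discrete set $\mathcal E$ outside $[0,\infty)$ the only possible non-absolutely-continuous part away from $0$.

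For part (2), the heart is the resolvent identity
\begin{align}
R^m(\lambda \pm i\eps) = R^m_0(\lambda \pm i\eps)\bigl(I + V R^m_0(\lambda \pm i\eps)\bigr)^{-1},\label{resolvent-identity-plan}
\end{align}
read as an identity between operators on the critical spaces. The admissibility of $V$ should be precisely the statement that $V : X^* \to X$ is bounded (and, for the compactness argument, that it can be approximated in this operator norm by "nice" truncations). Then $V R^m_0(\lambda \pm i\eps) : X \to X$ is a uniformly bounded family by Theorem \ref{Main theorem -1}. The key point is to invert $I + V R^m_0(\lambda \pm i\eps)$ on $X$: one shows first that $V R^m_0(\lambda \pm i\eps)$ is a \emph{compact} operator on $X$ for $\eps > 0$ (using the truncation approximation plus the local smoothing in $X \hookrightarrow W_{-m}^2$, $W_m^2 \hookrightarrow X^*$ from \eqref{embedding result} to gain a Rellich-type compact embedding on bounded regions), so that $I + VR^m_0$ is Fredholm of index zero and invertibility is equivalent to triviality of the kernel. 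A nonzero element $f$ of the kernel at a real parameter $\lambda \notin \mathcal E$, $\lambda \neq 0$, together with the mapping properties $\ft{} : X \to L^2(\mathbb S^{d-1})$, $\ift{} : L^2(\mathbb S^{d-1}) \to X^*$ and the fact that $\Im R^m_0(\lambda + i0)$ is (a constant times) the restriction--extension operator $\ift{}\ift{}^*$ on the sphere $\mathbb S^{d-1}_{r(\lambda)}$, yields $u = R^m_0(\lambda+i0)f \in X^*$ with $(P_m(D) + V)u = \lambda u$; a standard argument (pairing with $u$, taking imaginary parts) shows $\ft{f}|_{\mathbb S_{r(\lambda)}} = 0$, hence $u \in W_m^2$ is a genuine $L^2$-eigenfunction, forcing $\lambda \in \mathcal E \cup \{0\}$, a contradiction. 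This gives a \emph{pointwise in $\lambda$} bound; uniformity on the compact set $I$ follows because $\lambda \mapsto (I + VR^m_0(\lambda\pm i\eps))^{-1}$ is continuous in operator norm up to $\eps = 0$ (analytic Fredholm theory / continuity of the boundary values of $R_0^m$, which one extracts from the kernel analysis behind Theorem \ref{Main theorem -1}), so the norm is bounded on the compact set $I \times [0,1]$.

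Finally, the spectral consequences are formal once \eqref{LAP-V} is in hand: the uniform boundedness of $R^m(\lambda \pm i\eps) : X \to X^*$ for $\eps \in (0,1]$, together with the embeddings $L^2_{\text{comp}} \hookrightarrow X$ and $X^* \hookrightarrow L^2_{\text{loc}}$, gives a limiting absorption principle in weighted $L^2$ on $I$, whence by Kato's theory of $H$-smoothness the spectral measure of $H_m$ restricted to $I$ is absolutely continuous; since $I$ was an arbitrary compact subset of $(\mathbb R \setminus\{0\})\setminus\mathcal E$ and $\sigma_{ess}(H_m) = [0,\infty)$, we get $\sigma_{sc}(H_m) = \emptyset$ and $\sigma_{ac}(H_m) = [0,\infty)$, and the existence and completeness of the wave operators $\Omega^\pm(H_m, P_m(D))$ follows from the stationary (or Kato--Birman) scheme using the same smoothing bounds on the perturbation $V$. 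I expect the main obstacle to be the \emph{compactness} of $VR^m_0(\lambda\pm i\eps)$ on the non-reflexive, sum/intersection-type space $X$: one cannot simply cite Rellich, and must instead exploit the precise structure $X = W^{p_d,2}_{-\theta}+S_m(B)$, approximating $V$ by compactly supported smooth truncations in $\mathcal B(X^*,X)$ and controlling the error through the Stein--Tomas and trace estimates that underlie the definition of $X$; getting this approximation, and hence the Fredholm alternative, to work cleanly in these spaces is the technical crux.
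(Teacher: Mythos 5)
Your overall skeleton for part (2) is the same Agmon--Kato--Kuroda scheme the paper uses, but there are two concrete gaps. First, part (1): Kato--Rellich requires $\|Vu\|_{L^2}\le \varepsilon\|P_m(D)u\|_{L^2}+C_\varepsilon\|u\|_{L^2}$ for $u\in W_m^2$, and admissibility does not give this. The definition only yields $V\in\mathcal B(X^*,X)$, and $X$ embeds into $W_{-m}^2$, not into $L^2$; the admissible class contains potentials as singular as $V\in L_0^{q,\infty}$ with $q$ down to $d/(2m)$ (and, in the Ionescu--Schlag framework this generalizes, first-order perturbations), for which $Vu\notin L^2$ for general $u\in W_m^2$. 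This is precisely why the domain is taken to be $D(H_m)=\{u\in W_m^2: H_mu\in L^2\}$ rather than $W_m^2$ itself; the paper obtains self-adjointness not from Kato--Rellich but from the argument of Theorem 1.3 in \cite{I-S}, which rests on the symmetry of $V$ on $X^*$ and the compactness of $R_0^m(i)V$ on $W_m^2$, both consequences of property (2) of Definition \ref{admissible definition}.

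Second, in part (2) the step you describe as ``pairing with $u$, taking imaginary parts shows $\hat f|_{\mathbb S_{r(\lambda)}}=0$, hence $u\in W_m^2$ is a genuine $L^2$-eigenfunction'' hides the actual difficulty. Even granting the imaginary-part argument (which itself needs the symmetry of $V$ as an operator on $X^*$ --- not automatic from property (1), since $\mathcal S$ is not dense in $X^*$), the vanishing of $\hat f$ on the characteristic sphere does not by itself put $u=R_0^m(\lambda\pm i0)f$ in $L^2$: one needs a quantitative decay bootstrap. In the paper this is exactly the role of the weighted resolvent estimate (Corollary \ref{Weighted uniform bound}, with the weights $\mu_{N,\gamma}$) combined with property (2), which gives $\|(1+|x|^2)^N u\|_{X^*}<\infty$ for every $N$ and hence $u\in D(H_m)$; your outline has no substitute for this, and without it the exceptional set of the Fredholm inversion is not identified with $\mathcal E$, so the uniform bound on $I$ is not established. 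Conversely, the compactness of $VR_0^m$ that you single out as the technical crux is essentially built into the definition (property (2) makes $V:X^*\to X$ compact), so that obstacle is smaller than you expect. The remaining spectral consequences you sketch (absence of singular continuous spectrum via Reed--Simon XIII.20, negative non-eigenvalues in the resolvent set via the Fredholm alternative, and existence/completeness of local wave operators via Kato smoothness of the factors $A_j,B_j$ from property (3), yielding $\sigma_{ac}(H_m)=[0,\infty)$) are in line with the paper once \eqref{LAP-V} is in hand.
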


To get this result, we use the important Corollary \ref{Weighted uniform bound} (see Section 2), which is deduced from Theorem \ref{Main theorem -1}. This Corollary and a Rellich theorem established in \cite{A-H} gives the regularity for the eigenfunctions.

 The rest of this paper is organized as follows. In Section 2, we prove a limiting absorption principle for high-order Laplacians. As a corollary, we deduce a weighted resolvent estimate. In Section 3, we define admissible potentials and prove that the same limiting absorption principle is also valid for admissible perturbations.

\subsection*{Acknowledgements}
G. Xu  was supported by National Key Research and Development Program of China (No. 2020YFA0712900) and by NSFC ( No. 11831004).

\section{The Limiting Absorption Principle for $P_m(D)$  in $\mathcal B(X, X^*)$}\label{section-LAP}

In this section, we show that for every $\lambda>0$, as $\varepsilon$ goes to $0^{+}$, the weak-$*$ limit of $R_0^m(\lambda\pm i \varepsilon )$ exists as an operator $X\to X^*$.  In order to do that,  we first prove that the limit exists as an operator  $\mathcal S\to \mathcal S'$ using Privalov's theorem. Then we verify that it is bounded from  $X$ to $X^*$, and that the norm is locally uniform in $\lambda$.

For any $z=\lambda\pm i\varepsilon \in \mathbb C \setminus \mathbb R$ and $f,g\in \mathcal{S}(\mathbb R^d)$, we have
\begin{align*}
  \langle R^m_0(\lambda\pm i\varepsilon )f,g\rangle&=\frac{1}{(2\pi)^d}\int_{\mathbb R^d}\frac{\hat{f}(\xi)\overline{\hat{g}}(\xi)}{P_m(\xi)-(\lambda \pm i\varepsilon)}d\xi\\
  &=\frac{1}{(2\pi)^d}\frac{1}{2m}\int_{0}^{\infty}
  \frac{t^{(d-2m)/2m}}{t-(\lambda \pm i\varepsilon)}\left(\int_{\mathbb S^{d-1}}\hat{f} (t^{1/2m}w)\overline{\hat{g}}(t^{1/2m}w)dw\right)dt.
\end{align*}
If we denote \[\gamma(t): =t^{(d-2m)/2m}\int_{\mathbb S^{d-1}}\hat{f}(t^{1/2m}w)\overline{\hat{g}}(t^{1/2m}w)dw.\]
 It is easy to check that $\gamma(t)$ is locally H\"older continuous in $t \in (0, \infty)$. Hence, by Privalov's theorem, the boundary operator $R^m_0(\lambda\pm i0 )  =\lim_{\varepsilon\to 0}R^m_0(\lambda\pm i\varepsilon )$ exists as an $\mathcal B(\mathcal S(\mathbb R^d), \mathcal S'(\mathbb R^d))$-valued function for every $\lambda >0$ and are locally H\"older continuous. Furthermore,  for any $f, g \in \mathcal S (\mathbb R^d)$,  we can write the boundary operator as
\begin{align}
\langle R^m_0(\lambda &\pm i0)f,g\rangle  =\langle (P_m(\xi)-(\lambda \pm i0))^{-1}\hat f,\hat{g}\rangle\nonumber\\
&=\pm \frac{1}{(2\pi)^d}\frac{i\pi}{2m}\int_{P_m(\xi)=\lambda}\hat{f}(\xi)
  \overline{\hat{g}}(\xi)d\sigma
  +\frac{1}{(2\pi)^d}{\operatorname{p.v.}}\int_{\mathbb R^d}\frac{\hat{f}(\xi)\overline{\hat{g}}(\xi)}{P_m(\xi)-\lambda}d\xi\label{Cauchy integral formula}.
 \end{align}
 Here, $\rm {p.v.}$ means that the integral is to be understood in the principle value sense.

Using the Fourier multiplier theorem and the first embedding in \eqref{embedding result}, it is easy to check for any   $ z \in \mathbb C \setminus [0, \infty)$ that we have the estimate
\begin{align*}
	|\langle R_{0} ^m (z )f, g \rangle |\leq C_{z}\|f\|_{W_{m}^{2}}\|g\|_{W_{m}^{2}}\leq C_{z}\|f\|_{X}\|g\|_{X}.
\end{align*}
That is, $R^m_0(z)$ is a $\mathcal B(X, X^*)$-valued analytic operator in $\mathbb C \setminus [0, \infty)$. Using the maximum principle for  analytic functions,
  we will have for all $\lambda \in [\delta, \delta^{-1}]\  (\delta \in (0,1])$,
  \begin{align*}
  \sup_{\lambda \in [\delta, \delta^{-1}],\varepsilon\in [0,1] }|\langle R_{0} ^m (\lambda \pm i\varepsilon )f, g \rangle |&\leq C_{\delta,d} \|f\|_{X}\|g\|_{X},\\
\shortintertext{provided we can prove that }
  	\sup_{\lambda \in [\delta, \delta^{-1}] }|\langle R_{0} ^m (\lambda \pm i0)f, g \rangle |&\leq C_{\delta,d} \|f\|_{X}\|g\|_{X}.
  \end{align*}

Thus, in the following, we give the uniform bound for the boundary operators which implies Theorem \ref{Main theorem -1} using the maximum principle for analytic functions.
\begin{prop}\label{main proposition}
Assume that $\delta \in (0, 1]$. Then
\begin{align}
	\sup_{\lambda\in [\delta, \delta^{-1}]}\|R^m_0(\lambda\pm i0)\|_{X\to X^{*}}
	\leq C_{\delta,d} <\infty.
\end{align}
\end{prop}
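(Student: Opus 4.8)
The plan is to start from the decomposition \eqref{Cauchy integral formula}, which exhibits the boundary operator $R^m_0(\lambda\pm i0)$ as the sum of its imaginary part --- a surface-measure operator supported on $\{P_m(\xi)=\lambda\}$ --- and its real part, a principal-value operator, and to bound the two pieces separately, uniformly for $\lambda\in[\delta,\delta^{-1}]$. Throughout I write $r=r(\lambda)=\lambda^{1/(2m)}$ as in \eqref{notation-r-lambda}, so the relevant sphere is $\mathbb{S}_r^{d-1}$; since $r$ is confined to the compact set $[\delta^{1/(2m)},\delta^{-1/(2m)}]$, any fixed power of $r$ or of $\langle r\rangle$ generated along the way is $\le C_{\delta,d}$, and I may freely rescale to $r=1$, the spaces $X,X^{*}$ being comparable to their dilates with constants depending only on $\delta,d$. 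Isolating the $\delta(P_m(\xi)-\lambda)$ term in the first display of this section shows that the surface part is, up to the explicit constant $\tfrac{\pm i\pi}{2m(2\pi)^{d}}r^{1-2m}$, the restriction--extension operator $f\mapsto\mathcal F^{-1}\!\big(\widehat f|_{\mathbb{S}_r^{d-1}}\,d\sigma_r\big)$; by the boundedness of $\mathcal F\colon X\to L^2(\mathbb{S}^{d-1})$ and $\mathcal F^{-1}\colon L^2(\mathbb{S}^{d-1})\to X^{*}$ recalled in Section~\ref{section-intro} (Stein--Tomas in Lorentz spaces together with the Agmon--H\"ormander trace lemma), together with a scaling relating $\mathbb{S}_r^{d-1}$ to $\mathbb{S}^{d-1}$, this piece is bounded $X\to X^{*}$ uniformly in $\lambda$. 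Hence the entire difficulty is the principal-value part.

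For the p.v. part I first localize. Fix radial cut-offs $\chi_{\mathrm{near}}+\chi_{\mathrm{far}}\equiv1$ with $\supp\chi_{\mathrm{near}}\subset\{r/2\le|\xi|\le2r\}$ and $\chi_{\mathrm{near}}\equiv1$ near $\{|\xi|=r\}$. On $\supp\chi_{\mathrm{far}}$ one has $|P_m(\xi)-\lambda|\gtrsim_{\delta}\langle\xi\rangle^{2m}$, so $\langle\xi\rangle^{2m}\chi_{\mathrm{far}}(\xi)(P_m(\xi)-\lambda)^{-1}$ is a bounded symbol with a $\lambda$-uniform bound; by Plancherel the corresponding operator is bounded $W^{2}_{-m}\to W^{2}_{m}$, hence, by the embeddings \eqref{embedding result}, bounded $X\to X^{*}$ uniformly. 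For the $\chi_{\mathrm{near}}$-piece I exploit the exact form of the symbol: on the annulus $P_m(\xi)-\lambda=(|\xi|-r)\,a(|\xi|)$ with $a(\rho)=\sum_{j=0}^{2m-1}\rho^{j}r^{2m-1-j}$ a radial smooth function $\asymp r^{2m-1}$ there, and hence
\[
\chi_{\mathrm{near}}(\xi)\big(P_m(\xi)-\lambda\big)^{-1}
=\frac{1}{a(r)}\,\frac{\beta(\xi)}{|\xi|-r}
+\frac{1}{|\xi|-r}\Big(\frac{\chi_{\mathrm{near}}(\xi)}{a(|\xi|)}-\frac{\beta(\xi)}{a(r)}\Big),
\]
where $\beta$ is a radial cut-off $\equiv1$ near $\{|\xi|=r\}$ with $\beta\,\chi_{\mathrm{near}}=\beta$. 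The bracket in the last term vanishes on $\{|\xi|=r\}$, so after division by $|\xi|-r$ it is again a smooth, compactly supported, radial symbol of size $\lesssim r^{-2m}$, and $\langle\xi\rangle^{2m}$ times it is bounded --- so this term is disposed of exactly as the $\chi_{\mathrm{far}}$-piece. We are thus reduced to the model operator $M^{\pm}_{\lambda}$ with symbol $\tfrac{1}{a(r)}\,\beta(\xi)\,(|\xi|-r\mp i0)^{-1}$: the restriction operator dressed with the weight $\tfrac{1}{a(r)}\asymp r^{1-2m}$, which is constant in the radial variable. It remains to show $M^{\pm}_{\lambda}\colon X\to X^{*}$ is bounded uniformly.

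Since $X=A+C$ with $A:=W_{-\theta_{m,d}}^{p_d,2}$, $C:=S_m(B)$, and correspondingly $X^{*}=A^{*}\cap C^{*}$ with $A^{*}=W_{\theta_{m,d}}^{p_d',2}$, $C^{*}=S_{-m}(B^{*})$, it suffices to control the four bilinear cross-terms $\langle M^{\pm}_{\lambda} f,g\rangle$ with each of $f,g$ in $A$ or in $C$. After rescaling to $r=1$ and absorbing the multipliers $\langle\xi\rangle^{\pm\theta_{m,d}}$ and $\langle\xi\rangle^{\pm m}$ (each a smooth, compactly supported, hence $L^p$- and $B$-bounded multiplier on the annulus), every case reduces to an estimate for the flat operator with symbol $\beta(\xi)(|\xi|-1\mp i0)^{-1}$, all remaining $\lambda$-dependence being a bounded power of $r$. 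On $A\times A$ the imaginary part of this operator is the restriction--extension operator on $\mathbb{S}^{d-1}$, bounded $L^{p_d,2}\to L^{p_d',2}$ by the Stein--Tomas theorem in Lorentz spaces of Bak--Seeger; its real part $\beta(\xi)\,\mathrm{p.v.}\,(|\xi|-1)^{-1}$ is treated by a dyadic decomposition $\beta=\sum_{k\ge0}\beta_k$ with $\supp\beta_k\subset\{\,\bigl||\xi|-1\bigr|\asymp2^{-k}\,\}$, pairing shells symmetric about $\mathbb{S}^{d-1}$ so as to exploit the principal-value cancellation, and summing the resulting thin-shell Stein--Tomas estimates. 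On $C\times C$, using a partition of unity on $\mathbb{S}^{d-1}$ subordinate to charts in which the sphere is a graph $\xi_d=\phi(\xi')$ over a coordinate hyperplane, one has $|\xi|-1=(\xi_d-\phi(\xi'))\,c(\xi)$ with $c$ smooth and non-vanishing, so that modulo smooth factors $M^{\pm}_{\lambda}$ acts in the variable $x_d$ by convolution with a bounded multiple of a Heaviside function (the inverse Fourier transform of $(\xi_d\mp i0)^{-1}$); this maps $L^1(\mathbb{R}_{x_d};L^2(\mathbb{R}^{d-1}_{x'}))$ into $L^\infty(\mathbb{R}_{x_d};L^2(\mathbb{R}^{d-1}_{x'}))$, and by \eqref{embedded B} and \eqref{dual embedding} it follows that the operator is bounded $B\to B^{*}$, hence $C\to C^{*}$. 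The two mixed terms $A\to C^{*}$ and $C\to A^{*}$ are obtained by combining these two mechanisms --- Stein--Tomas/Lorentz on one factor, the trace lemma realized through the mixed-norm structure \eqref{embedded B} on the other --- using $B\hookrightarrow L^2\hookrightarrow B^{*}$ and Sobolev embedding to pass between the $L^2$- and $L^{p_d}$-based scales.

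The crux is the principal-value part of the near-sphere operator. The surface term is essentially a restatement of the stated restriction/trace estimates, whereas the p.v. term forces one to understand how those estimates persist when the sphere is thickened into a neighborhood of concentric spheres; this is exactly where the fine structure of the critical spaces enters --- the Heaviside-kernel reduction resting on $B\hookrightarrow L^1(\mathbb{R};L^2(\mathbb{R}^{d-1}))$ for the $B$-to-$B^{*}$ estimate, and the dyadic sum of thin-shell Stein--Tomas estimates organized by principal-value cancellation for the Lorentz estimate --- and where one must verify that the radius-dependent constants (the Stein--Tomas constant $C_r$, the trace constant, the multipliers $\langle\xi\rangle^{\pm\theta_{m,d}}$ and $\langle\xi\rangle^{\pm m}$, and the weight $a(r)^{-1}$) combine to a bound uniform over $\lambda\in[\delta,\delta^{-1}]$. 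I note finally that nowhere does the argument invoke the fundamental solution of $P_m(D)$, which is why the method applies to all $m$ with no upper bound.
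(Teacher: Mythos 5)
Your overall architecture (split off the part of the symbol supported away from $\{P_m(\xi)=\lambda\}$ and handle it as a bounded $W^2_{-m}\to W^2_m$ multiplier via \eqref{embedding result}; near the sphere, factor the symbol through the graph $\xi_d=\varphi_\lambda(\xi')$ resp.\ the radial variable, and use the Heaviside kernel together with \eqref{embedded B}--\eqref{dual embedding} for the $B\to B^*$ bound) matches the paper's proof. But at the two genuinely hard estimates your argument has gaps. The main one is the $(L^{p_d,2}\to L^{p_d',2})$ bound for the principal-value piece. After your reduction to the model symbol $\beta(\xi)(|\xi|-r)^{-1}$, each dyadic shell $\{\,||\xi|-r|\asymp 2^{-k}\,\}$ contributes an operator of norm $O(1)$: the thin-shell Stein--Tomas bound gives $\lesssim 2^{-k}$ for the shell multiplier, which is exactly consumed by the factor $\sim 2^{k}$ of the symbol. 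Hence the sum over scales diverges, and everything rests on the asserted ``principal-value cancellation'' between shells symmetric about the sphere. To turn that into a summable gain you would need a quantitative (H\"older-in-radius) continuity of the restriction--extension operator $\rho\mapsto \mathcal F^{-1}(\hat f\,d\sigma_\rho)$ in the $L^{p_d,2}\to L^{p_d',2}$ operator norm; this is not automatic (differentiating in $\rho$ costs a factor of $|x|$), it is nowhere justified in your sketch, and proving it essentially amounts to redoing the oscillatory-integral kernel analysis. The paper takes a different route here: it writes the near-sphere operator as convolution with the explicit kernel \eqref{first-expr-for-K+}, observes the pointwise bound $|K_\lambda^+(x)|\le C_{\delta,d}(1+|x|)^{-\frac{d-1}{2}}$, and then invokes the dyadic-decomposition-plus-interpolation argument of Theorem 6 in \cite{Gu}, which is designed exactly for kernels with this decay. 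As written, your crucial Lorentz-space estimate is an assertion, not a proof.

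The second gap is the treatment of the mixed terms. The bound $L^{p_d,2}\to B^*$ (and its dual $B\to L^{p_d',2}$) cannot be obtained by ``combining the two mechanisms'' through $B\hookrightarrow L^2\hookrightarrow B^*$ and Sobolev embedding: since $p_d'=\frac{2d+2}{d-1}>\frac{2d}{d-1}$, one has $L^{p_d',2}\not\hookrightarrow B^*$, so the two diagonal estimates plus embeddings do not imply the off-diagonal ones. A genuine argument is needed, and the paper supplies it: setting $h_{x_d}(y',y_d)=iH(x_d-y_d)f(y',y_d)$, the graph/Heaviside representation shows that for each fixed $x_d$ the function $T_\lambda^{+\rm bad}f(\cdot,x_d)$ is, in $x'$, the Fourier extension of $Q_\lambda(\xi')\,\ft{h_{x_d}}(\xi',\varphi_\lambda(\xi'))$, so \eqref{dual embedding}, Plancherel in $x'$, and the Bak--Seeger Stein--Tomas theorem applied to $h_{x_d}$ give the uniform $\sup_{x_d}$ bound. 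You gesture at this (``the trace lemma realized through the mixed-norm structure''), but the actual mechanism --- restriction applied to the half-space truncation $h_{x_d}$ --- must be spelled out, since no interpolation or embedding shortcut is available. Finally, your preliminary splitting of \eqref{Cauchy integral formula} into delta and p.v.\ parts is harmless but unnecessary: the paper treats the full boundary symbol $(P_m(\xi)-\lambda\mp i0)^{-1}$ at once, which is what makes the Heaviside kernel computation clean.
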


\begin{proof}


 Let $\chi_\lambda:\mathbb R^d\rightarrow [0,1]$ denote a smooth function supported in $\{\xi: P_m(\xi)\in [{\lambda}/2, \frac{3}{2}\lambda] \}$ and equal to $1$ in  $\{\xi:P_m(\xi)\in [\frac{3}{4}{\lambda}, \frac{5}{4}\lambda] \}$. We write $\chi_\lambda(D)$ and $(1-\chi_\lambda)(D)$ for the corresponding Fourier multiplier operators. This allows us to split
 \[R^m_0(\lambda\pm i0)=\chi_\lambda(D) R^m_0(\lambda\pm i0) + (1-\chi_\lambda)(D)R^m_0(\lambda\pm i0).\]
 For the operator $(1-\chi_\lambda)(D)R^m_0(\lambda\pm i0)$, notice that $(1-\chi_\lambda(\xi))\frac{(1+|\xi|^{2})^m}{|\xi|^{2m}-\lambda} \in C^\infty(\mathbb R^d)$ with $L^\infty$ norm bounded uniformly in $\lambda$. Therefore, this part of the operator satisfies the stronger estimate
 \begin{align}\label{weighted nonsingular part}
   \|(1-\chi_\lambda)(D)R^m_0(\lambda\pm i0)\|_{W_{-m}^2\rightarrow W_{m}^2}\leq C_{\delta,d}.
 \end{align}
So we only need to prove that
\begin{align}\label{singular part}
\|\chi_\lambda(D)R^m_0(\lambda\pm i0)\|_{X\rightarrow X^*}\leq C_{\delta,d},
\end{align}
which by the definitions of $X$ and $X^*$ is equivalent to the following four inequalities:
\begin{alignat*}{4}
  &&\|S_m\chi_\lambda(D)R^m_0(\lambda\pm i0)S_m&\|_{B\rightarrow B^*} &\leq C_{\delta,d},\\
  &&\|S_{\theta_{m,d}}\chi_\lambda(D)R^m_0(\lambda\pm i0)S_{\theta_{m,d}}&\|_{ L^{p_d,  2}(\mathbb R^d)\rightarrow L^{p_d', 2}(\mathbb R^d)}&\leq C_{\delta,d},\\
  &&\|S_{m}\chi_\lambda(D)R^m_0(\lambda\pm i0)S_{\theta_{m,d}}&\|_{ L^{p_d,  2}(\mathbb R^d)\rightarrow B^*} &\leq C_{\delta,d} ,\\
  &&\|S_{\theta_{m,d}}\chi_\lambda(D)R^m_0(\lambda\pm i0)S_{m}&\|_{B\rightarrow L^{p_d', 2}(\mathbb R^d)} &\leq C_{\delta,d}.
\end{alignat*}
By duality, the ($B\to L^{p_d',  2}$) bound follows from $(L^{p_d,  2} \to B^*)$ bound.

Notice that, for any fixed $\lambda>0 $ and $\alpha>0$,  $S_\alpha(\xi) \chi_{\lambda} (\xi)\in C_c^\infty(\mathbb R^d)$, and for any $\beta \geq 0, $
\begin{align*}
	\|D^\beta [S_\alpha(\xi) \chi_{\lambda} (\xi)]\|_{\infty} \leq C_{\beta} (1+\lambda^2)^{\alpha/2}.
\end{align*}
Thus, by Corollary 2.6 in \cite{A-H}, Fourier multiplier operator $S_\alpha(D) \chi_{\lambda}(D)$ is bounded  on $B$. Also, a standard multiplier theorem asserts that it is bounded on $L^p$ for $1\leq p\leq \infty$. Thus using the interpolation theorem on Lorentz spaces%
, it is also bounded on  $L^{p_d, 2}(\mathbb R^d)$.
Furthermore, for any $\lambda \in [\delta, \delta^{-1}]$, we have the following uniform bounds,
\begin{align}
	\|S_\alpha \chi_{\lambda} f\|_{B}&\leq C_{\delta,d} \|f\|_{B}\label{bounded in $B$},
\\	\|S_\alpha \chi_{\lambda} f\|_{L^{p_d, 2}(\mathbb R^d)}&\leq C_{\delta,d} \|f\|_{L^{p_d, 2}(\mathbb R^d)} \label{Bounded in $L^{p_d, 2}$}.
\end{align}

Now, we define  $T^{\pm}_{\lambda}f\coloneqq [(P_m(\xi)-(\lambda\pm i0))^{-1}\chi_\lambda(\xi)\hat f(\xi)]^{\vee}(x)$.
Using  \eqref{bounded in $B$}, \eqref{Bounded in $L^{p_d, 2}$}, and by a  density argument, in order to prove \eqref{singular part},  we only need to prove the following inequality is valid for all $f\in \mathcal S(\mathbb R^d)$ ,
  \begin{align}\label{NL-1}
  \|T^{\pm}_\lambda f\|_{L^{p_d',2}(\mathbb R^d)\cap B^*}\leq C\|f\|_{L^{p_d,2}(\mathbb R^d)+B}
  \end{align}
In the following, we only give the details for $T_\lambda^{+}$; $T^-_\lambda$ can be treated similarly.

  Let $A=\{\xi_1, \dots, \xi_N\}$ denote a $\varepsilon_\delta/100$-net on
 $\mathbb S_{r(\lambda)}$, where $\varepsilon_\delta$ is a small positive constant which depends on $\delta$. Then using a partition of unity, we can write $\chi_\lambda =\chi_\lambda^0+\chi_\lambda^1+\cdots+\chi_\lambda^N$, where $\chi_\lambda^0$ is supported in the set $\{\xi: ||\xi|-r(\lambda)|\geq \frac{\varepsilon_\delta}{10}\}$ and $\chi_\lambda^j$ is supported in the set $\{\xi: |\xi^j-\xi|\leq \frac{\varepsilon_\delta}{2}\}$.  We use $\chi_\lambda^j(D)$ to denote the corresponding Fourier multiplier operators.  Using the same arguments as \eqref{weighted nonsingular part}, we have \begin{align}
   \|\chi_\lambda^0(D)R^m_0(\lambda\pm i0)\|_{W_{-m}^2\rightarrow W_{m}^2}\leq C_{\delta,d}.
 \end{align}

 For each $j\in \{1, \cdots, N\}$, using the rotation invariance, we may assume that the support of  $\chi_\lambda^j$ is near the north pole. From now on, we use $\chi_\lambda$ to denote $\chi_\lambda^j$ for simplicity.
 Thus, by the implicit function theorem, we can write $\xi_d=\varphi_\lambda (\xi')$ near $\mathbb S^{d-1}_{r(\lambda)} $, with $P_m(\xi',\varphi_\lambda (\xi'))-\lambda=0$.
  Note that $\frac{\chi_\lambda (\xi)(\xi_d-\varphi_\lambda (\xi'))}{P_m(\xi)-\lambda}$ has a removable singularity at $\xi_d = \varphi_\lambda(\xi')$, with $\lim_{\xi_d\to \varphi_\lambda(\xi')} \frac{\chi_\lambda (\xi)(\xi_d-\varphi_\lambda (\xi'))}{P_m(\xi)-\lambda} = \frac{\chi_\lambda (\xi)}{\frac{\partial P_m}{\partial\xi_d} (\xi',\varphi_\lambda(\xi'))}.$ Hence, by Taylor expansion around $\xi_d=\varphi_\lambda(\xi')$,
      \begin{align*}
      &\chi_\lambda (\xi)(P_m(\xi)-\lambda-i0)^{-1}\\
      =& \frac{1}{\xi_d-\varphi_\lambda (\xi')-i0}{\frac{\chi_\lambda (\xi)(\xi_d-\varphi_\lambda (\xi'))}{P_m(\xi)-\lambda}}
      \\
      =&\frac{1}{\xi_d-\varphi_\lambda (\xi')-i0}\underbrace{\frac{\chi_\lambda (\xi',\varphi_\lambda (\xi'))}{\frac{\partial P_m}{ \partial \xi_d}(\xi', \varphi_\lambda (\xi') )} }_{\coloneqq Q_{\lambda}(\xi')}+G_\lambda(\xi),
      \end{align*}
       where  $ Q_{\lambda}(\xi')\in C_c^{\infty}(\mathbb R^{d-1})$, $ G_\lambda(\xi)\in C_c^{\infty}(\mathbb R^d) $ and both are uniformly bounded in $\lambda \in [\delta, \delta^{-1}]$.
    Recall that the inverse Fourier transform of $\frac{1}{\xi_d-\varphi_\lambda (\xi')-i0}$ is given by
  \begin{align}
  	\frac{1}{2\pi}\int_{\mathbb R} e^{ix_d \xi_d}\frac{d\xi_d}{\xi_d-\varphi_\lambda (\xi')-i0}
  	=i e^{ i x_d \varphi_\lambda (\xi')} H(x_d),
  \end{align}
  where $H(t)$ is the Heaviside function.
   We define the main kernel for $T_\lambda ^{+}$ as the inverse Fourier transform of $\frac{1}{\xi_d-\varphi_\lambda (\xi')-i0} Q_\lambda(\xi')$, i.e.,
   \begin{align}
 	K_\lambda^+ (x', x_d)&= \frac{1}{(2\pi)^{d-1}}i H(x_d)\int_{\mathbb  R^{d-1}} e^{ i(x_d  \varphi_\lambda (\xi')+x'\xi')} Q_{\lambda}(\xi')d\xi' \label{first-expr-for-K+}	.
	 	 \end{align}

We now write  $T_\lambda^+$ as follows:
 \begin{align*}
 	T_\lambda ^{+}f(x', x_d)&= \int_{\mathbb R^d} K_\lambda^+(x'-y',x_d-y_d) f(y', y_d)dy' dy_d+\int_{\mathbb R^d} \ift{ G_{\lambda}}(x-y)f(y)dy\\
 	&\hspace{-0.2em}\coloneqq T_\lambda^{+\rm {bad}}f(x)+T_\lambda^{+\rm {good}} f(x),
\end{align*}
where $\ift{G_{\lambda}}$  is the inverse Fourier transform of $G_\lambda(\xi)$.
      Since $ G_\lambda(\xi)\in C_c^\infty(\mathbb R^d)$,   $\ift{G_\lambda}  \in \mathcal{S}(\mathbb R^d) $. Thus, $T_\lambda^{+\rm {good}} f(x)$ satisfies the following estimate
    \begin{align*}
    	\Big\|\int_{\mathbb R^d}\ift{G_{\lambda}}(x-y)f(y)dy\Big\|_{L^{p_d',2}(\mathbb R^d)\cap B^*}\leq C\|f\|_{L^{p_d,2}(\mathbb R^d)+B}.
    \end{align*}
It remains to prove the  \eqref{NL-1} is valid for the main term $T_\lambda^{+\rm {bad}}f(x)$.

Firstly, we prove the $ (B \to B^*)$ bound.  Notice that for the kernel $K_{\lambda}^+(x)$, by \eqref{first-expr-for-K+} we have, writing $\ftp{f}(\xi',x_d) =\int_{\mathbb R^{d-1}} f(x',x_d) e^{-ix'\cdot\xi'} dx'$ for the partial Fourier transform in $x'$,
\begin{align}\label{Kernel property-1}
	   \sup_{x_d} \big\| \ftp{ K_\lambda^+}(\xi', x_d)\big \|_{L^\infty(\mathbb R^{d-1})}=  \sup_{x_d} \sup_{\xi'\in \mathbb R^{d-1}} \big| i e^{ i x_d \varphi_\lambda (\xi')} H(x_d)Q_{\lambda}(\xi')\big |<\infty,
\end{align}
By \eqref{embedded B}, \eqref{dual embedding}, Minkowski's inequality, Plancharel, and \eqref{Kernel property-1}, we have
\begin{align}\label{B to B^* bound}
	\|T_\lambda^{+\rm{bad}}f(x)\|_{B^{*}}&\leq C \sup_{x_d \in \mathbb R} \|T_\lambda^{+\rm{bad}}f(   \cdot, x_d)\|_{L^2_{x'}}\nonumber\\
	&\leq C\sup_{x_d \in \mathbb R}\int_{\mathbb R}\left\|\int_{\mathbb R^{d-1}}K_\lambda^+(x'-y', x_d-y_d)f(y', y_d)d{y'}\right\|_{L^2_{x'}}d{y_d}\nonumber\\
	&\leq C\sup_{x_d \in \mathbb R} \int_{\mathbb R}\left\| e^{ i (x_d-y_d)\varphi_\lambda (\xi')} H(x_d-y_d)Q_{\lambda}(\xi')\ftp{f}(\xi', y_d)\right\|_{L^2_{\xi'}}d{y_d}\nonumber\\
	&\leq C\int_{\mathbb R}\left\|\ftp{f}(\xi', y_d)\right\|_{L^2_{\xi'}}d{y_d} \leq  C\int_{\mathbb R}\left\|f(x', y_d)\right\|_{L^2_{x'}}d{y_d} \lesssim \|f\|_B.
\end{align}

    Secondly, to get the $(L^{p_d,2} \to B^*) $ bound, let us define $h_{x_d}(y', y_d)\coloneqq i H(x_d-y_d)f(y', y_d)$ for each fixed $x_d$.
    It is easy to check that $$\sup_{x_d}\|h_{x_d}\|_{L^{p_d,2}(\mathbb R^d)}\\
  \leq \|f\|_{L^{p_d,2}(\mathbb R^d)}.$$
Then,
  \begin{align}\label{From L^{p_d,2} to B^*}
 T_\lambda^{+\rm{bad}}&f(x', x_d)= \int_{\mathbb R}\int_{\mathbb R^{d-1}} K_\lambda^+(x'-y',x_d-y_d) f(y', y_d)dy' dy_d \nonumber\\
  	&=\int_{\mathbb R}\int_{\mathbb R^{d-1}} e^{i x' \xi'}\ftp K_\lambda^+(\xi',x_d-y_d) \ftp f(\xi', y_d)d\xi' dy_d\nonumber\\
  	&=  \int_{\mathbb R}\int_{\mathbb R^{d-1}} e^{i x' \xi'}i H(x_d-y_d)e^{i(x_d-y_d)\varphi_\lambda (\xi')}\ftp f(\xi', y_d)Q_{\lambda}(\xi') d {\xi'} dy_d\nonumber\\
  	&= \int_{\mathbb R^{d-1}} e^{i x' \xi'} e^{i x_d \varphi_\lambda (\xi')}\int_{\mathbb R} i H(x_d-y_d)e^{-iy_d \varphi_\lambda (\xi')}\ftp f(\xi', y_d)dy_d  Q_{\lambda}(\xi')d {\xi'}.
  \end{align}
That is,
  \begin{align*}
   T_\lambda^{+\rm{bad}}f(x', x_d)= &\int_{\mathbb R^{d-1}}e^{ix'\xi'}e^{i x_d \varphi_\lambda (\xi')}\ft{ h_{x_d}}(\xi', \varphi_\lambda (\xi'))Q_{\lambda}(\xi')d\xi'.
  \end{align*}
By \eqref{dual embedding}, the Plancharel theorem and the Stein--Tomas restriction theorem, we have
\begin{align*}
  \| T_\lambda^+ f(x', x_d)\|_{B^*}&\leq C \sup_{x_d}\left\|\int_{\mathbb R^{d-1}}e^{ix'\xi'}e^{i x_d \varphi_\lambda (\xi'))}Q_{\lambda}(\xi')\ft{h_{x_d}}(\xi', \varphi_\lambda (\xi'))d\xi'\right\|_{L^2_{x'}(\mathbb R^{d-1})} \\
  &\leq C\sup_{x_d}\|Q_{\lambda}(\xi')\ft{h_{x_d}}(\xi', \varphi_\lambda (\xi'))\|_{L^2_{\xi'}(\mathbb R^{d-1})}\\
  &\leq C_{\delta,d} \sup_{x_d}\|\ft{h_{x_d}}\|_{L^2(\mathbb S_{r(\lambda)}^{d-1})}\leq C_{\delta,d} \sup_{x_d}\|h_{x_d}\|_{L^{p_d,2}(\mathbb R^d)}\\
  &\leq C_{\delta,d} \|f\|_{L^{p_d,2}(\mathbb R^d)}.
\end{align*}

 Finally, to prove the $(L^{p_d, 2} \to L^{p_d', 2})$ bound,  we notice that  $|K_\lambda^+ (x)|\leq C_{\delta,d} (1+|x|)^{-\frac{d-1}2}$.  Using a dyadic decomposition and  interpolation, following the same argument to prove Theorem 6 in \cite{Gu} gives
\begin{align}
	\|T_\lambda^{+\rm{bad}}f(x)\|_{L^{p_d',2}(\mathbb R^d)}\leq C\|f\|_{L^{p_d,2}(\mathbb R^d)}.
\end{align}
This concludes the proof.
  \end{proof}

 \begin{remark}
The proof shows that this limiting absorption principle \eqref{LAP} is also valid for all homogeneous elliptic operators $P(D)$ satisfying
the non-degeneracy condition, that is,
$P(\xi)=\lambda$ for $\lambda>0$
 defines a compact hypersurface with nonvanishing Gaussian curvature.
 For example, one could use the finite type operators of (even) order $m\ge 2$, e.g. $P(D)=\xi_1^m+\cdots+\xi_d^m$.	
\end{remark}

 For $N\geq 0$, $\gamma \in (0,1]$, and $x\in \mathbb R^d$,  for $t
\in [0,\infty)$ we define the weight function
$${\mu_{N,\gamma}}(t)=\frac{(1+t^2)^N}{(1+\gamma t^2)^N},$$
and for $x\in \mathbb R^d$, and $\mu_{N, \gamma}(x)\coloneqq\mu_{N, \gamma}(|x|)$. This weight function was introduced in \cite{Hom} to prove the regularity of the eigenfunctions. 

 As an important corollary, we give the following weighted resolvent estimate.

\begin{corollary} \label{Weighted uniform bound}
Assume that $\delta \in (0, 1]$. Then
  \begin{align}
  \sup_{\lambda \in [\delta, \delta^{-1}]}\|\mu_{N,\gamma} R^m_0(\lambda\pm i0)\mu_{N,\gamma}^{-1}\|_{X\to X^{*}}
	\leq C_{\delta,d} <\infty.
  \end{align}
  \end{corollary}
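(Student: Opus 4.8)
The plan is to repeat the argument proving Proposition~\ref{main proposition}, carrying the weight through every step. First I would record the features of $\mu_{N,\gamma}$ that make the conjugation harmless: it is smooth and radial, $1\le\mu_{N,\gamma}(x)\le(1+|x|^2)^N$, it is doubling ($\mu_{N,\gamma}(2t)\le 4^N\mu_{N,\gamma}(t)$, whence $\mu_{N,\gamma}(x)\le C_N\big(\mu_{N,\gamma}(|x'|)+\mu_{N,\gamma}(|x_d|)\big)$ for $x=(x',x_d)$), and --- uniformly in $\gamma\in(0,1]$ --- it is slowly varying:
\[
\big|D^\alpha\mu_{N,\gamma}^{\pm 1}(x)\big|\le C_{N,\alpha}\,(1+|x|)^{-|\alpha|}\,\mu_{N,\gamma}^{\pm 1}(x),
\]
which for $|\alpha|=1$ follows from the identity $\nabla\mu_{N,\gamma}(x)=2N(1-\gamma)\,x\,\big[(1+|x|^2)(1+\gamma|x|^2)\big]^{-1}\mu_{N,\gamma}(x)$ and for general $\alpha$ by induction. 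Hence $\mu_{N,\gamma}^{\pm 1}$ is a symbol of order $0$ whose seminorms are bounded uniformly in $\gamma$; consequently, after conjugation by $\mu_{N,\gamma}$, each of the pieces that were shown bounded $W_{-m}^2\to W_m^2$ in the proof of Proposition~\ref{main proposition} by the Mikhlin multiplier theorem --- namely the contributions of $(1-\chi_\lambda)(D)R^m_0$ and of $\chi^0_\lambda(D)R^m_0$ --- remains a $\Psi^0$ operator with uniform bounds, hence again bounded $W_{-m}^2\to W_m^2$ uniformly in $\gamma$; and the same is true for $\mu_{N,\gamma}\,T^{\pm\mathrm{good}}_\lambda\,\mu_{N,\gamma}^{-1}$, since its kernel is a Schwartz kernel times $\mu_{N,\gamma}(x)\mu_{N,\gamma}(y)^{-1}\le C_N(1+|x-y|)^{2N}$. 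It therefore remains to bound the main operator $\mu_{N,\gamma}\,T^{\pm\mathrm{bad}}_\lambda\,\mu_{N,\gamma}^{-1}$, that is the operator with kernel $\mu_{N,\gamma}(x)\,K^\pm_\lambda(x-y)\,\mu_{N,\gamma}(y)^{-1}$, in each of the four mapping senses of that proof.

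For this I would split each estimate according to whether $|x-y|\le |x|/2$ or $|x-y|>|x|/2$. On the first set $\mu_{N,\gamma}(x)\mu_{N,\gamma}(y)^{-1}\le C_N$ by slow variation, so that piece is controlled by the unweighted bounds of Proposition~\ref{main proposition}. On the second set $|x|,|y|\le 3|x-y|$ and $\mu_{N,\gamma}(x)\mu_{N,\gamma}(y)^{-1}\le\mu_{N,\gamma}(x)\le C_N\big(\mu_{N,\gamma}(|x'|)+\mu_{N,\gamma}(|x_d|)\big)$; I would then insert this decomposition into the phase--space representation of $T^{\pm\mathrm{bad}}_\lambda$ used there and absorb the two factors separately --- the $x'$--factor through the partial Fourier transform in $x'$ (using its slow variation, as for the $\Psi^0$ pieces above), and the $x_d$--factor against the Heaviside cutoff $H(\pm x_d)$ and the uniform bound on $Q_\lambda$ --- repeating verbatim the Plancherel/Minkowski, trace, and Stein--Tomas steps. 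For the $(L^{p_d,2}\to L^{p_d',2})$ bound one feeds the decay $|K^\pm_\lambda(x)|\le C_{\delta,d}(1+|x|)^{-(d-1)/2}$ into the dyadic/interpolation scheme of \cite{Gu} with the radial weight present, dominating the latter by using that $\mu_{N,\gamma}$ saturates at $\gamma^{-N}$ once $|x|\gtrsim\gamma^{-1/2}$. The Corollary then follows by collecting the pieces and invoking the same maximum-principle and limiting argument already used to pass from Proposition~\ref{main proposition} to Theorem~\ref{Main theorem -1}.

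The main obstacle is precisely the far region $|x-y|\gtrsim|x|$: there the bare kernel bound $(1+|x-y|)^{-(d-1)/2}$ is already at the threshold of integrability for the unweighted $(L^{p_d,2}\to L^{p_d',2})$ estimate, so the growth of $\mu_{N,\gamma}$ cannot be paid for by kernel size alone; one is forced to exploit the oscillatory restriction-operator structure of $K^\pm_\lambda$ --- encoded in the graph--supported partial Fourier transform $\widetilde{K^\pm_\lambda}(\xi',x_d)=i\,e^{ix_d\varphi_\lambda(\xi')}H(\pm x_d)Q_\lambda(\xi')$ --- in tandem with the slow variation and the saturation scale $\gamma^{-1/2}$ of the weight, while keeping every constant independent of $\gamma$ (which is what the intended application to eigenfunction regularity requires). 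Everything else is bookkeeping built on Proposition~\ref{main proposition}.
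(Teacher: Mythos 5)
Your reduction of the regular pieces (the $(1-\chi_\lambda)(D)$ part, $\chi^0_\lambda(D)$ part and $T^{\pm\mathrm{good}}_\lambda$) via symbol/weighted-multiplier bounds uniform in $\gamma$ matches what the paper does with Lemmas \ref{A-1} and \ref{bounedness of weighted multipliers}. But your treatment of the main term $\mu_{N,\gamma}T^{\pm\mathrm{bad}}_\lambda\mu_{N,\gamma}^{-1}$ has two genuine gaps. First, on the region $|x-y|\le|x|/2$ you argue that since $\mu_{N,\gamma}(x)/\mu_{N,\gamma}(y)\le C_N$ there, "that piece is controlled by the unweighted bounds of Proposition \ref{main proposition}". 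This does not follow: the $B\to B^*$, $L^{p_d,2}\to B^*$ and $L^{p_d,2}\to L^{p_d',2}$ bounds for $T^{\pm\mathrm{bad}}_\lambda$ are not kernel-size estimates; they rest on Plancherel in $x'$ applied to the graph-supported partial Fourier transform $ie^{ix_d\varphi_\lambda(\xi')}H(x_d)Q_\lambda(\xi')$, on Stein--Tomas, and (for the Lorentz bound) on the oscillatory dyadic scheme of \cite{Gu}. Multiplying the kernel by a bounded but non-tensor factor $\mathbf 1_{\{|x-y|\le|x|/2\}}\,\mu_{N,\gamma}(x)/\mu_{N,\gamma}(y)$ destroys the convolution-in-$x'$ structure, and pointwise comparability of kernels does not transfer such operator bounds. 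Second, on the far region you discard the denominator ($\mu(x)/\mu(y)\le\mu(x)\le C_N(\mu(|x'|)+\mu(|x_d|))$) and hope to "absorb" the two unbounded factors through the partial Fourier transform and the Heaviside cutoff; no mechanism is given, and none exists at this level of generality: $\mu_{N,\gamma}(|x'|)$ grows polynomially (uniformly in $\gamma$ until the saturation scale), the Heaviside function constrains $x_d-y_d$, not $|x_d|$, and invoking the saturation $\mu_{N,\gamma}\le\gamma^{-N}$ produces constants blowing up as $\gamma\to0$, which is exactly what the corollary must avoid (the application to eigenfunction decay takes $\gamma\to 0$). As you yourself note in the closing paragraph, the far region is "the main obstacle"; the proposal identifies it but does not resolve it.

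The missing idea is the geometric reduction of the weight, borrowed from \cite{I-S}, which the paper uses precisely to avoid your near/far dichotomy. After localizing the Fourier support of $\chi_\lambda^j$ to an $\varepsilon_\delta$-cap, which one may rotate to the north pole $\xi^+=(0,\dots,0,r(\lambda))$, one chooses a basis $\xi_1^+,\dots,\xi_d^+$ of unit vectors all lying in $\{|\xi-\xi^+|\le\varepsilon_0/2\}$; since $|x|\le C\sum_j|x\cdot\xi_j^+|$, one has $\mu_{N,\gamma}(x)\sim\sum_j\mu_{N,\gamma}(x\cdot\xi_j^+)$, so the radial weight may be replaced by the one-dimensional weights $\mu_{N,\gamma}(|x\cdot\xi_j^+|)$ in directions close to the normal of $\mathbb S^{d-1}_{r(\lambda)}$ at the cap, reducing \eqref{singular weighted estimate} to \eqref{key bound for weighted estimates}. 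After a further rotation the weight depends only on $x_d$, the conjugated bad kernel is $\frac{\mu_{N,\gamma}(x_d)}{\mu_{N,\gamma}(y_d)}K^+_\lambda(x'-y',x_d-y_d)$, and the one-sided support $H(x_d-y_d)$ of $K_\lambda^+$ is then played against the ratio $\mu_{N,\gamma}(x_d)/\mu_{N,\gamma}(y_d)$ so that the partial Fourier transform in $x'$ stays uniformly bounded; the three estimates of Proposition \ref{main proposition} ($B\to B^*$, $L^{p_d,2}\to B^*$ via $h^\mu_{x_d}$, and $L^{p_d,2}\to L^{p_d',2}$ via the kernel decay $(1+|x|)^{-(d-1)/2}$ and \cite{Gu}) then repeat verbatim with constants independent of $\gamma$ and $N$-uniform in $\lambda\in[\delta,\delta^{-1}]$. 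Without this replacement of the radial weight by near-normal one-dimensional weights, the conjugation cannot be reduced to the unweighted argument, and your outline does not close.
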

We start with following multiplier lemmas  concerning the weight $\mu_{N,\gamma}$, which
  follows from Lemma 3.2 in \cite{I-S} and  restricted weak type interpolation theorem.
\begin{lemma}\label{A-1}
  Let $m(\xi)\in C^\infty(\R^d)$. If $m$ satisfies the differential bounds
  $$|\partial_\xi^\alpha m(\xi)|\leq C_\alpha(1+|\xi|^2)^{-|\alpha|/2},\ \ for\ \ any\ \  |\alpha|\geq0, $$
  then    if $p \in \{p_d,2, p_d'\}$ and $\mu\in\{\mu_{N,\gamma},\mu_{N,\gamma}^{-1}\}$, we have
  \begin{align*}
    \| \mu \varphi(D)\mu^{-1}\|_{L^{p,2}(\mathbb R^d)\rightarrow L^{p,2}(\mathbb R^d)}+\|\mu \varphi(D)\mu^{-1}\|_{B\rightarrow B}+\|\mu \varphi(D)\mu^{-1}\|_{B^*\rightarrow B^*}\leq C.
  \end{align*}
\end{lemma}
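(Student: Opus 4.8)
The plan is to deduce the lemma from two inputs: the conjugated–multiplier bounds of Lemma~3.2 in \cite{I-S}, and real interpolation in the Lorentz scale. First I would record that the weights $\mu_{N,\gamma}^{\pm1}$ are, for each fixed $N$, of the admissible type treated in \cite{I-S}: from the explicit formula $\mu_{N,\gamma}(t)=(1+t^2)^N/(1+\gamma t^2)^N$ one checks that $\mu_{N,\gamma}$ is radial and smooth and that all of its logarithmic derivatives satisfy $|\mu_{N,\gamma}^{-1}\partial_x^\alpha\mu_{N,\gamma}(x)|\le C_{\alpha,N}(1+|x|)^{-|\alpha|}$ \emph{uniformly in} $\gamma\in(0,1]$, and the same bound holds with $\mu_{N,\gamma}$ replaced by $\mu_{N,\gamma}^{-1}$. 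Hence Lemma~3.2 of \cite{I-S} applies and gives, for every symbol $\varphi$ obeying the stated Mikhlin-type differential inequalities and every $\mu\in\{\mu_{N,\gamma},\mu_{N,\gamma}^{-1}\}$,
\[
\|\mu\,\varphi(D)\,\mu^{-1}\|_{L^p(\R^d)\to L^p(\R^d)}+\|\mu\,\varphi(D)\,\mu^{-1}\|_{B\to B}+\|\mu\,\varphi(D)\,\mu^{-1}\|_{B^*\to B^*}\le C_{p,N,d}
\]
for all $1<p<\infty$, with constants independent of $\gamma$. This already disposes of the $B$ and $B^*$ statements, and of the case $p=2$ since $L^{2,2}=L^2$.

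It remains to reach the Lorentz endpoints $L^{p_d,2}$ and $L^{p_d',2}$. Here I would use that $1<p_d<2<p_d'<\infty$ for every $d\ge2$ (indeed $p_d'$ is the H\"older conjugate of $p_d$), so that each of $p_d$ and $p_d'$ lies in the \emph{interior} of the range on which the previous display holds. Fixing $1<p_0<p_d<p_1<2$, the operator $T_\mu:=\mu\,\varphi(D)\,\mu^{-1}$ is of strong type $(p_0,p_0)$ and $(p_1,p_1)$, so by the real interpolation identity $(L^{p_0}(\R^d),L^{p_1}(\R^d))_{\theta,2}=L^{p_d,2}(\R^d)$, with $\theta$ chosen so that $\tfrac1{p_d}=\tfrac{1-\theta}{p_0}+\tfrac{\theta}{p_1}$, it is bounded on $L^{p_d,2}(\R^d)$ with a $\gamma$-independent norm; equivalently one first passes to restricted weak type bounds at $p_0$ and $p_1$ and invokes the Marcinkiewicz/Stein--Weiss interpolation theorem between Lorentz spaces — the ``restricted weak type interpolation'' referred to in the statement. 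Running the same argument with $2<q_0<p_d'<q_1<\infty$ produces the bound on $L^{p_d',2}(\R^d)$. Together with the $p=2$ case from the first paragraph, this gives the $L^{p,2}\to L^{p,2}$ bound for every $p\in\{p_d,2,p_d'\}$; adding the $B$ and $B^*$ bounds yields the asserted inequality, with $C=C(N,d)$ independent of $\gamma$.

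The only step that needs genuine care is the appeal to Lemma~3.2 of \cite{I-S} with the claimed uniformity in $\gamma$: conjugating $\varphi(D)$ by $\mu_{N,\gamma}$ produces $\varphi(D)$ together with commutator terms whose symbols gain a factor $(1+|\xi|)^{-1}$ but pick up $\mu_{N,\gamma}^{-1}\partial_x\mu_{N,\gamma}$, which is $O_N(1)$ uniformly in $\gamma$; iterating, each error remains a Fourier multiplier in the same Mikhlin class with $\gamma$-independent constants, and the parameter $N$ enters only through those constants. Granting this input, the Lorentz upgrade in the second paragraph is entirely routine.
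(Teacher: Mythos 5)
Your proposal follows essentially the same route as the paper: the paper offers no detailed proof of this lemma, deducing it precisely from Lemma~3.2 of \cite{I-S} (which supplies the $\gamma$-uniform $L^p$, $B$ and $B^*$ bounds for the conjugated multiplier) together with restricted weak type interpolation in the Lorentz scale, which is exactly your two-step argument. The additional details you supply --- the $\gamma$-uniform bounds $|\mu_{N,\gamma}^{-1}\partial_x^\alpha\mu_{N,\gamma}|\le C_{\alpha,N}(1+|x|)^{-|\alpha|}$ justifying the appeal to \cite{I-S}, and the identification $(L^{p_0},L^{p_1})_{\theta,2}=L^{p_d,2}$ --- are correct and simply flesh out the citation.
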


\begin{lemma}\label{bounedness of weighted multipliers}
  For $\alpha\in[-2m,2m]$, we have the following estimates
  \begin{align*}
    \|\mu S_{\alpha}\mu^{-1}S_{-\alpha}\|_{L^{p,2}(\mathbb R^d)\rightarrow L^{p,2}(\mathbb R^d)}+\|\mu S_{\alpha}\mu^{-1}S_{-\alpha}\|_{B\rightarrow B}+\|\mu S_{\alpha}\mu^{-1}S_{-\alpha}\|_{B^*\rightarrow B^*}\leq C,
  \end{align*}
   \begin{align*}
    \| S_{\alpha}\mu S_{-\alpha}\mu^{-1}\|_{L^{p,2}(\mathbb R^d)\rightarrow L^{p,2}(\mathbb R^d)}+\|S_{\alpha}\mu S_{-\alpha}\mu^{-1}\|_{B\rightarrow B}+\|S_{\alpha}\mu S_{-\alpha}\mu^{-1}\|_{B^*\rightarrow B^*}\leq C,
 \end{align*}
where $p \in \{p_d,2, p_d'\}$, $\mu\in\{\mu_{N,\gamma},\mu_{N,\gamma}^{-1}\}$.
\end{lemma}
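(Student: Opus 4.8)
I would reduce both operators in the lemma to Lemma~\ref{A-1} by means of the pseudodifferential symbol calculus; the only nontrivial observation is that conjugating $S_\alpha$ by the weight $\mu\in\{\mu_{N,\gamma},\mu_{N,\gamma}^{-1}\}$ produces error terms that are of negative order in $\xi$ (hence of Mikhlin type) multiplied by bounded functions of $x$. The first thing to record is the slowly–varying estimate for the weight: a direct computation from $\mu_{N,\gamma}(x)=(1+|x|^2)^{N}(1+\gamma|x|^2)^{-N}$ gives
\begin{equation*}
	|\partial_x^\beta \mu(x)|\le C_{N,\beta}\,(1+|x|)^{-|\beta|}\,\mu(x)
\end{equation*}
for every multi-index $\beta$, with $C_{N,\beta}$ \emph{independent of} $\gamma\in(0,1]$. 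In particular $\mu\,\partial^\beta(\mu^{-1})$ and $(\partial^\beta\mu)\,\mu^{-1}$ are bounded functions, uniformly in $\gamma$, and I will freely use that multiplication by an $L^\infty$ function is bounded on each of $L^{p,2}(\R^d)$ ($p\in\{p_d,2,p_d'\}$), on $B$, and on $B^*$, with norm at most its sup norm (this is immediate from the definitions of $B$, $B^*$).

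For the first operator I would write, commuting $S_\alpha$ past $\mu^{-1}$ and using $S_\alpha S_{-\alpha}=I$ and $\mu\mu^{-1}=I$, that $\mu S_\alpha\mu^{-1}S_{-\alpha}=I+\mu\,[S_\alpha,\mu^{-1}]\,S_{-\alpha}$. The composition calculus gives $[S_\alpha,\mu^{-1}]$ the symbol $\sim\sum_{|\beta|\ge 1}\frac1{\beta!}\,\partial_\xi^\beta\big[(1+|\xi|^2)^{\alpha/2}\big]\,D_x^\beta(\mu^{-1})(x)$; composing on the right with the multiplier $S_{-\alpha}$ and on the left with multiplication by $\mu$ (both exact at the level of symbols) shows that $\mu[S_\alpha,\mu^{-1}]S_{-\alpha}$ has symbol $\sim\sum_{|\beta|\ge 1}\frac1{\beta!}\,\big(\mu\,D_x^\beta\mu^{-1}\big)(x)\,\psi_\beta(\xi)$, where $\psi_\beta(\xi)=\partial_\xi^\beta[(1+|\xi|^2)^{\alpha/2}]\,(1+|\xi|^2)^{-\alpha/2}$ satisfies $|\partial_\xi^\sigma\psi_\beta(\xi)|\le C_{\alpha,\beta,\sigma}(1+|\xi|^2)^{-(|\beta|+|\sigma|)/2}$ for $\alpha$ in any fixed bounded interval (this is where the boundedness of the range $[-2m,2m]$ enters). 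Thus every $\psi_\beta$ with $|\beta|\ge 1$ meets the hypothesis of Lemma~\ref{A-1}. Truncating the expansion at a fixed, sufficiently large order $|\beta|\le L=L(d,m)$, and writing $M_a$ for multiplication by $a$, one gets
\begin{equation*}
	\mu[S_\alpha,\mu^{-1}]S_{-\alpha}=\sum_{1\le|\beta|\le L}\frac1{\beta!}\,M_{\mu D^\beta\mu^{-1}}\,\psi_\beta(D)+R_L,
\end{equation*}
where each summand is a bounded multiplication operator composed with a multiplier that is bounded on $L^{p,2}(\R^d)$, $B$ and $B^*$ (Lemma~\ref{A-1} with $N=0$, or a standard Mikhlin-type estimate), with the asserted $\gamma$-uniform constant, while $R_L$ is a pseudodifferential operator whose symbol is of order $\le-(L+1)$ in $\xi$ and decays like $(1+|x|)^{-(L+1)}$ in $x$, uniformly in $\gamma$; such an operator is bounded on all three spaces by routine estimates once $L$ is large. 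Adding back $I$ finishes the first operator.

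The second operator I would treat symmetrically: $S_\alpha\mu=\mu S_\alpha+[S_\alpha,\mu]$ gives $S_\alpha\mu S_{-\alpha}\mu^{-1}=I+[S_\alpha,\mu]S_{-\alpha}M_{\mu^{-1}}$, and $[S_\alpha,\mu]S_{-\alpha}$ has symbol $\sim\sum_{|\beta|\ge 1}\frac1{\beta!}D_x^\beta\mu(x)\,\psi_\beta(\xi)$, so after the same truncation $[S_\alpha,\mu]S_{-\alpha}=\sum_{1\le|\beta|\le L}\frac1{\beta!}M_{D^\beta\mu}\,\psi_\beta(D)+\widetilde R_L$. Right–multiplying by $M_{\mu^{-1}}$ and inserting $M_{\mu^{-1}}M_\mu=I$, each main term becomes $M_{(D^\beta\mu)\mu^{-1}}\circ\big(\mu\,\psi_\beta(D)\,\mu^{-1}\big)$, in which $(D^\beta\mu)\mu^{-1}$ is bounded uniformly in $\gamma$ by the slowly–varying estimate and $\mu\,\psi_\beta(D)\,\mu^{-1}$ is exactly of the form handled by Lemma~\ref{A-1}; the remainder $\widetilde R_L M_{\mu^{-1}}$ is controlled as before, since $M_{\mu^{-1}}$ is bounded on each space.

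I expect the main obstacle to be purely technical, localized in two places: first, extracting the uniformity of all constants in $\gamma\in(0,1]$, which I believe rests entirely on the slowly–varying estimate above and its analogue for $\mu^{-1}$; and second, the control of the symbol–expansion remainders $R_L,\widetilde R_L$, which is routine once one notes that conjugating $S_\alpha$ by $\mu^{\pm 1}$ can only create symbols that are simultaneously of negative order in $\xi$ and rapidly decaying in $x$. Conceptually the content of the lemma is the reduction to Lemma~\ref{A-1}; the remainder is the classical pseudodifferential calculus together with the slowly–varying bookkeeping for $\mu_{N,\gamma}^{\pm1}$.
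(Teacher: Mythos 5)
Your route is genuinely different from the paper's: the paper gives no self-contained argument for this lemma, simply asserting that it (together with Lemma~\ref{A-1}) ``follows from Lemma 3.2 in \cite{I-S} and restricted weak type interpolation'', i.e.\ it imports the weighted multiplier bounds of Ionescu--Schlag on $L^p$, $B$, $B^*$ and upgrades Lebesgue to Lorentz $L^{p,2}$ by real interpolation. You instead reduce the statement to Lemma~\ref{A-1} by a commutator/symbol-calculus expansion, writing $\mu S_\alpha\mu^{-1}S_{-\alpha}=I+\mu[S_\alpha,\mu^{-1}]S_{-\alpha}$ (and symmetrically for the second operator), using that composition with a Fourier multiplier on the right and with a multiplication operator on the left is exact on symbols, and that the slowly varying bound $|\partial_x^\beta\mu^{\pm1}|\le C_{N,\beta}(1+|x|)^{-|\beta|}\mu^{\pm1}$ holds uniformly in $\gamma\in(0,1]$. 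This is a legitimate and more transparent proof scheme, and it has the advantage of making the $\alpha\in[-2m,2m]$ dependence and the $\gamma$-uniformity of the main terms completely explicit, whereas the paper's citation leaves the adaptation from $m=1$ to general $m$ implicit.

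One step, as written, does not deliver the claimed $\gamma$-uniformity: your treatment of the second remainder, ``$\widetilde R_L M_{\mu^{-1}}$ is controlled as before, since $M_{\mu^{-1}}$ is bounded on each space.'' Estimating $\widetilde R_L$ and $M_{\mu^{-1}}$ separately loses uniformity in $\gamma$ for both choices of $\mu$: if $\mu=\mu_{N,\gamma}^{-1}$ then $\|M_{\mu^{-1}}\|=\|M_{\mu_{N,\gamma}}\|\sim\gamma^{-N}$, and if $\mu=\mu_{N,\gamma}$ then the symbol of $\widetilde R_L$ contains $D_x^\beta\mu$, which is only bounded by $(1+|x|)^{-|\beta|}\mu_{N,\gamma}(x)\le(1+|x|)^{-|\beta|}\gamma^{-N}$. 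The same caveat applies, less visibly, to $R_L$ for the first operator, whose remainder symbol involves $\mu(x)D_x^\beta\mu^{-1}(x+y)$ with a shifted argument. The fix is to keep the two weight factors attached and use the $\gamma$-uniform doubling estimate $\mu_{N,\gamma}^{\pm1}(x)\,\mu_{N,\gamma}^{\mp1}(y)\le C_N(1+|x-y|)^{4N}$, absorbing the resulting polynomial growth in $x-y$ by the rapid off-diagonal decay of the remainder kernel (so the number of integrations by parts, equivalently the truncation order, must be taken large depending on $N$ and $d$, not only on $d,m$). With that correction your argument goes through and yields the lemma with constants depending on $N$ but not on $\gamma$, matching what the paper needs.
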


\begin{proof}[Proof of Corollary \ref{Weighted uniform bound}]

As before, we split the operator into a singular part on the shell and a regular part away from the shell. For the regular part,  Lemma \ref{A-1} and Lemma \ref{bounedness of weighted multipliers} gives
  the following stronger inequality,
\begin{align}
	\|\mu_{N,\gamma} (1-\chi_\lambda(D))R^m_0(\lambda\pm i0)\mu_{N,\gamma}^{-1} \|_{W_{-m}^2 \to W_{m}^2}\leq C_{\delta,d}.
\end{align}
Using the embedding result \eqref{embedding result}, we have
\begin{align*}
	\|\mu_{N,\gamma} (1-\chi_\lambda(D))R^m_0(\lambda\pm i0)\mu_{N,\gamma}^{-1}\|_{X\to X^*} < C_{\delta,d}.
\end{align*}
Thus, it remains to prove that
\begin{align}
	\|\mu_{N,\gamma} \chi_\lambda(D) R^m_0(\lambda\pm i0)\mu_{N,\gamma}^{-1}\|_{X\to X^{*}}<C_{\delta,d}.
\end{align}

By  the proof of Theorem 1.2 in \cite{I-S} and Lemma \ref{bounedness of weighted multipliers}, this inequality is equivalent to the following boundness 
  \begin{align}\label{singular weighted estimate}
  	 \|\mu_{N,\gamma} \chi_\lambda(D) R^m_0(\lambda\pm i0)\mu_{N,\gamma}^{-1} f\|_{L^{p_d', 2}\cap B^*}\leq C_{N}\|f\|_{ L^{p_d,  2}(\mathbb R^d)+B}.
  \end{align}
  for all $f \in \mathcal S$.
Using the partition of unity and rotation invariance, we can assume that the support of $\chi_\lambda $ is contained in a neighborhood  of the north pole. In order to prove inequality \eqref{singular weighted estimate}, it  suffices to prove
\begin{equation}\label{one direction inequality}
  \|{\mu_{N,\gamma}}(x_d) R_0^m(\lambda \pm i0 )f\|_{L^{p_d'}(\mathbb R^d)\cap B^*}\leq C_{N,\delta}\|{\mu_{N,\gamma}}(x_d)f\|_{ L^{p_d,  2}(\mathbb R^d)+B},
\end{equation}
for $f\in\mathcal{S}(\mathbb R^d)$ and $\ft{f}$ is supported in $\{\xi:|\xi-\xi^+|\leq \varepsilon_0\}$,
where $\xi^+=(0,\dots, 0, r(\lambda))$.  In fact, let $\xi_1^+,...,\xi_d^+$ be a basis of $\mathbb R^d$ consisting of unit vectors in the ball $\{\xi:|\xi-\xi^+|\leq \varepsilon_0/2\}$. Clearly, for any $x\in \mathbb R^d$,
$$|x|\leq C(|x\cdot\xi_1^+|+\dots+|x\cdot\xi_d^+|).$$
Since
$$\mu_{N,\gamma}(x)\sim{\mu_{N,\gamma}}(x\cdot\xi_1^+)+\dots+\mu_{N,\gamma}(x\cdot\xi_d^+),$$
we can replace the weight $\mu_{N,\gamma}(x)$ with $\mu_{N,\gamma}(x\cdot\xi_j^+)$ in \eqref{singular weighted estimate}. In other words,  \eqref{singular weighted estimate} would follow from the following inequalities, $j=1,\dots, d$:
\begin{equation}
\label{key bound for weighted estimates}
	\|\mu_{N,\gamma}(|x\cdot \xi_j^{+}|) R_0^m(\lambda\pm i0 )f\|_{L^{p_d'}(\mathbb R^d)\cap B^*}\leq C_{N}\|\mu_{N,\gamma}(|x\cdot \xi_j^+|)f\|_{ L^{p_d,  2}(\mathbb R^d)+B}.
\end{equation}

To summarize, we only need to consider
\[
	 T_\lambda ^{+,\mu} f(x',x_d)\coloneqq \int_{\mathbb R^d}e^{ix\cdot \xi} \frac{\mu_{N, \gamma}( x_d)}{\mu_{N, \gamma}( y_d)} \frac{\chi_\lambda(\xi) \widehat f(\xi)}{P_m(\xi)-\lambda-i0}d\xi.\]
What we need to prove now is that $T_\lambda^{+,\mu} :  L^{p_d,  2}(\mathbb R^d)+B\to L^{p_d'}(\mathbb R^d)\cap B^*$ boundedly. Following the proof of  Proposition \ref{main proposition}, we can rewrite it as the following:
 \begin{align}
 T_\lambda ^{+,\mu} f(x',x_d)
  	&=\int_{\mathbb R}\int_{\mathbb R^{d-1}}\frac{\mu_{N, \gamma}( x_d)}{\mu_{N, \gamma}( y_d)} K_\lambda^+( x'-y', x_d-y_d) f(y',y_d)dy' dy_d \nonumber\\
  	&\  +\nonumber\int_{\mathbb R^d}\frac{\mu_{N, \gamma}( x_d)}{\mu_{N, \gamma}( y_d)} \ift{ G_{\lambda}}(x-y)f(y)dy \\
  	&=:  T_\lambda ^{+,\mu, \textrm{bad}} f(x',x_d)+ T_\lambda ^{+,\mu, \textrm{good}} f(x',x_d)
  \end{align}
where  $K_\lambda^+( x', x_d) $ is given by \eqref{first-expr-for-K+} and $ \ift{ G_{\lambda}} \in \mathcal S$.
For $T_\lambda ^{+,\mu, \textrm{good}} f(x',x_d)$, it is bounded from $ L^{p_d,  2}(\mathbb R^d)+B$ to $ L^{p_d'}(\mathbb R^d)\cap B^*$.

To deal with the $T_\lambda ^{+,\mu, \textrm{bad}} f(x',x_d)$ term, we denote
\[K_\lambda^{+, \mu}( x', x_d, y_d)=\frac{\mu_{N, \gamma}( x_d)}{\mu_{N, \gamma}( y_d)} K_\lambda^+( x', x_d-y_d).\]
Then, its partial Fourier transform with respect to $x'$ is
\begin{align}
   \ftp{K_\lambda^{\mu,+}} (\xi', x_d, \  y_d)= \frac{\mu_{N, \gamma}( x_d)}{\mu_{N, \gamma}( y_d)}i e^{ i x_d \varphi_\lambda (\xi')} H(x_d-y_d)Q_{\lambda}(\xi').
\end{align}

 Since $\mu_{N, \gamma}(x_d)\leq \mu_{N, \gamma}(y_d) $ when $H(x_d-y_d)$ does not vanish. In this case, $|\frac{\mu_{N, \gamma}( x_d)}{\mu_{N, \gamma}( y_d)}| \leq 1$. Therefore,
    \begin{align}
  	\sup_{x_d,y_d} \big \| \ftp{K_\lambda^{\mu,+}} (\xi', x_d, \  y_d)\big \|_{L^{\infty}(\mathbb R^{d-1})}<\infty.
  	\end{align}
By repeating the proof in \eqref{B to B^* bound}, we see that $T_\lambda ^{+,\mu}$ is $B\to B^*$ uniformly bounded in $\lambda$.

Similarly, defining $h_{x_d}^\mu (y', y_d)\coloneqq  \frac{\mu_{N, \gamma}( x_d)}{\mu_{N, \gamma}( y_d)}  i H(x_d-y_d)f(y', y_d)$ and following the proof of \eqref{From L^{p_d,2} to B^*}, we can prove $T_\lambda ^{+,\mu}$ is uniformly bounded $L^{p_d, 2}\to B^*$.

 Also,  $|K_\lambda^{\mu,+ }(x'-y', x_d, y_d)|\leq C_{\delta,d} (1+|x'-y'|+|x_d-y_d|)^{-\frac{d-1}2}$, thus we have that $T_\lambda ^{+,\mu}$ is bounded $L^{p_d, 2}\to L^{p_d',2}$.
   This finishes the proof of the Corollary.
\end{proof}


\section{Limiting absorption principle for high-order shr\"odinger operator with Admissible perturbations}\label{section-admis}

In this section, we prove that the same limiting absorption principle  of Section 3 is also valid for high-order Schr\"odinger operators with admissible perturbations. Most of the results and methods are standard in this section, thus we omit some details and refer the reader to \cite{I-S}. The main innovation is Lemma \ref{weighted estimates}, which is a consequence of the Rellich theorem in \cite{A-H}.

We first give the definition of an admissible perturbation.
\begin{definition}\label{admissible definition}
 We say that $V$ is an admissible perturbation of $P_m(D)$ if:
  \begin{enumerate}
  	\item \label{P1}$V\in\mathcal{B}(X^*,X)$ and
  $$\langle V\phi,\psi\rangle=\langle \phi, V\psi\rangle $$
  for any $\phi,\psi\in\mathcal{S}(\mathbb R^d)$.\\
  \item \label{P2}For any $\varepsilon>0$ and $N\geq0$, there exist constants $A_{N,\varepsilon},R_{N,\varepsilon}\in[1,\infty)$ such that
  \begin{align}\label{De-1}
   \|\mu_{N,\gamma}Vu\|_{X}\leq\varepsilon\|\mu_{N,\gamma}u\|_{X^*}+A_{N,\varepsilon}\|u1_{\{|x|\leq R_{N,\varepsilon}\}}\|_{L^2}
   \end{align}
  for any $u\in X^*$ and any $\gamma\in(0,1]$.
  \item \label{P3}there is an integer $J\geq 1$ and operators $A_j, B_j \in \mathcal B(X^*, L^2)$ for $1\leq j \leq J$ such that for any $f, g \in X^*$,
		\begin{align}
 \langle Vf, g \rangle=\sum_{j=1}^J \langle B_j f, A_j g\rangle.
		\end{align}
  \end{enumerate}
\end{definition}

\begin{remark}\label{symmetric and compactness}
 Notice that $\mathcal S$ is not dense in $X^*$, thus  property \eqref{P1}  of an admissible perturbation alone is not enough to guarantee that $V$ is a symmetric operator on $X^*$. However, by the property \eqref{P2} and the  argument in  \cite{I-S}, one can prove that $V$ is symmetric on $X^*$. In addition, the property \eqref{P2} also guarantee that $V$ is a compact operator from  $X^*$ to $X$. In particular, $R^m_0(i)V$ is a compact operator on $W_m^{2}$.
\end{remark}
 Some trivial modifications of Proposition 1.4 in \cite{I-S} gives some examples of admissible perturbations of $P_m(D)$; we will omit the details (one can simply replace $q_0=d/2$ by $q_m=\frac{d}{2m}$ or $q_m>1$, $d\leq 2m$).

 In this paper, we focus on the potentials in  
 the completion of $C_c^\infty(\mathbb R^d)$ with respect to the norm $\|\cdot\|_{L^{q,\infty}}$, which we denote by $L_0^{q,\infty}(\mathbb R^d)$.
\begin{prop} Let $V$ be a  real valued potential and
 \begin{align}\label{potential type 1}
    V&\in L_0^{q,\infty}(\mathbb R^d) \text{ for some }  q \in \big[q_m, \frac{d+1}{2}\big],
\end{align}
then $V$ is an admissible perturbation.
\end{prop}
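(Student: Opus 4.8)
The plan is to verify the three defining properties of an admissible perturbation (Definition \ref{admissible definition}) for a real-valued $V \in L_0^{q,\infty}(\mathbb{R}^d)$ with $q \in [q_m, \tfrac{d+1}{2}]$. The central mechanism is that multiplication by an $L^{q,\infty}$ function interacts well with the Lorentz--Sobolev structure of $X$ and $X^*$ via Hölder's inequality in Lorentz spaces together with the Sobolev embeddings packaged in \eqref{embedding result} and the boundedness statements $\mathcal F : X \to L^2(\mathbb S^{d-1})$, $\mathcal F^{-1} : L^2(\mathbb S^{d-1}) \to X^*$. First I would record the key numerology: the exponent $q = \tfrac{d+1}{2}$ is exactly the endpoint for which $\tfrac{1}{p_d} = \tfrac{1}{q} + \tfrac{1}{p_d'}$ (since $\tfrac{1}{p_d} - \tfrac{1}{p_d'} = \tfrac{2}{d+1}$), so that Hölder in Lorentz spaces gives $\|Vf\|_{L^{p_d,2}} \lesssim \|V\|_{L^{q,\infty}} \|f\|_{L^{p_d',2}}$; and $q_m = \tfrac{d}{2m}$ is the scaling-critical exponent matching the $2m$ derivatives, handled through the Sobolev inclusion $W_m^2 \hookrightarrow X^*$, $X \hookleftarrow W_{-m}^2$. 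For intermediate $q$ one interpolates, or uses the $X \hookrightarrow W_{-m}^2$ leg directly.

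For property \eqref{P1}, I would first treat $V \in C_c^\infty$, where $V : X^* \to X$ boundedly follows from: split $X^*$-functions via the $W_{\theta_{m,d}}^{p_d',2}$ component (multiply, land in $W_{-\theta_{m,d}}^{p_d,2} \subset X$ by the Lorentz-Hölder computation plus the fact that multiplication by a smooth compactly supported function is bounded on Sobolev spaces) and handle the $S_{-m}(B^*)$ component separately using that $C_c^\infty \cdot B^* \subset B \subset$ the relevant space (here the $L^2$ pairing and the embedding $B \hookrightarrow L^2 \hookrightarrow B^*$ are used). Then the general case follows by density of $C_c^\infty$ in $L_0^{q,\infty}$ together with the uniform bound $\|V\|_{\mathcal B(X^*,X)} \lesssim \|V\|_{L^{q,\infty}}$, which passes to the limit; symmetry $\langle V\phi,\psi\rangle = \langle\phi,V\psi\rangle$ on $\mathcal S$ is immediate since $V$ is a real multiplication operator. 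For property \eqref{P3}, one writes $\langle Vf,g\rangle = \langle |V|^{1/2}(\operatorname{sign} V) f, |V|^{1/2} g\rangle$, i.e. $J = 1$, $B_1 = (\operatorname{sign} V)|V|^{1/2}$, $A_1 = |V|^{1/2}$, and checks $|V|^{1/2} \in \mathcal B(X^*, L^2)$; since $|V|^{1/2} \in L^{2q,\infty}$ and $2q \in [d/m, d+1]$, this is again a Lorentz-Hölder-plus-Sobolev estimate (note $\tfrac{1}{2} = \tfrac{1}{2q} + \tfrac{1}{p_d'}$ at the endpoint $2q = d+1$), interpolated for the $S_{-m}(B^*)$ piece with the help of \eqref{embedding result} and the trace/Stein--Tomas bounds.

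The main obstacle is property \eqref{P2}, the relative-smallness bound \eqref{De-1} with the weight $\mu_{N,\gamma}$, uniformly in $\gamma \in (0,1]$. The point of using the \emph{completion} $L_0^{q,\infty}$ rather than $L^{q,\infty}$ itself is precisely to enable this: given $\varepsilon > 0$, approximate $V$ by $V_0 \in C_c^\infty$ with $\|V - V_0\|_{L^{q,\infty}}$ small enough that $\|(V-V_0)u\|_X \le \tfrac{\varepsilon}{2}\|u\|_{X^*}$ (by the Lorentz-Hölder estimate above), and then commute the weight past $V - V_0$ using the weighted multiplier Lemmas \ref{A-1} and \ref{bounedness of weighted multipliers} — more precisely, $\mu_{N,\gamma}$ is itself a nice multiplier-type weight and the estimate $\|\mu_{N,\gamma}(V - V_0)u\|_X \lesssim \|(V-V_0)(\mu_{N,\gamma}u)\|_X + (\text{error})$ is controlled since $V - V_0$ is a \emph{multiplication} operator, which commutes exactly with the multiplication operator $\mu_{N,\gamma}(|x|)$; the weight norms cancel pointwise and one only needs $\|\mu_{N,\gamma}(\cdot)u\|_{X^*}$-type control, uniform in $\gamma$ because $0 < \mu_{N,\gamma} \le (1+t^2)^N$ and the constant in Lorentz-Hölder does not see the weight. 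For the remaining smooth, compactly supported piece $V_0$, the weight $\mu_{N,\gamma}$ is bounded on the support of $V_0$ by a constant $C_{N}$ depending only on $N$ and $\operatorname{supp} V_0$ (since $\mu_{N,\gamma} \le (1+R^2)^N$ on $\{|x| \le R\}$ uniformly in $\gamma$), so $\|\mu_{N,\gamma} V_0 u\|_X \lesssim C_N \|V_0 u\|_X \lesssim C_N \|u \mathbf 1_{\{|x| \le R_0\}}\|_{L^2}$ where $R_0$ bounds $\operatorname{supp} V_0$; this gives the second term of \eqref{De-1} with $A_{N,\varepsilon} = C_N \|V_0\|_\infty$ and $R_{N,\varepsilon} = R_0$. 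Combining the two pieces yields \eqref{De-1}. Throughout, the verification that $V$ (and $|V|^{1/2}$) are bounded as claimed when $q$ is in the open interval rather than at an endpoint is handled by real interpolation between the endpoint $q = \tfrac{d+1}{2}$ estimate and the $q = q_m$ estimate (the latter via plain Sobolev embedding $X \hookrightarrow W_{-m}^2$, $W_m^2 \hookrightarrow X^*$), so no genuinely new ideas are needed there.
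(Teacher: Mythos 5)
Much of your outline coincides with the paper's (very terse) argument: symmetry is immediate since $V$ is a real multiplication operator; condition \eqref{P3} of Definition \ref{admissible definition} follows from the factorization $\langle Vf,g\rangle=\langle \operatorname{sign}(V)|V|^{1/2}f,\,|V|^{1/2}g\rangle$ once $|V|^{1/2}\in\mathcal B(X^*,L^2)$ is known; and the $\varepsilon$-smallness \eqref{De-1} is exactly what membership in the completion $L^{q,\infty}_0$ is for: split $V=(V-V_0)+V_0$ with $V_0\in C_c^\infty$, use that multiplication operators commute with the weight $\mu_{N,\gamma}$, and that $\mu_{N,\gamma}V_0$ is bounded with fixed compact support uniformly in $\gamma\in(0,1]$. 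Your verification of \eqref{P2} is in fact more explicit than the paper's, which defers to Ionescu--Schlag. (A small remark: $X^*$ is an intersection, so you only ever need the $W^{p_d',2}_{\theta_{m,d}}$ half of its norm; there is no ``$S_{-m}(B^*)$ component'' to treat separately, and no density argument is needed for \eqref{P1} once the multiplicative bound holds for rough $V$.)

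The genuine gap is your treatment of exponents $q$ below the top endpoint $\tfrac{d+1}{2}$. You propose to handle $q=q_m$ ``through the Sobolev inclusion $W_m^2\hookrightarrow X^*$, $X\hookleftarrow W^2_{-m}$'' and then interpolate. But by \eqref{embedding result} the embeddings run $X\hookrightarrow W^2_{-m}$ and $W^2_m\hookrightarrow X^*$ (with only $X^*\hookrightarrow W^2_{m,\mathrm{loc}}$); the inclusion $W^2_{-m}\hookrightarrow X$ you invoke is false, and the true inclusions point the useless way: a bound $V:W^2_m\to W^2_{-m}$ (which is all that H\"older plus $L^2$-based Sobolev gives at $q=q_m$) does not yield $V\in\mathcal B(X^*,X)$, because $X^*$ is strictly larger than $W^2_m$ and $X$ strictly smaller than $W^2_{-m}$. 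Consequently your ``interpolation between the $q=q_m$ and $q=\tfrac{d+1}{2}$ estimates'' has no valid lower endpoint (and interpolating operator-norm bounds in the potential $V$ would itself need justification). The correct mechanism --- and the one the paper uses, following Lemma 6.1 of \cite{I-S} --- is the single estimate
$\||V|^{1/2}S_{-\theta_{m,d}}f\|_{L^2}\le C\|V\|^{1/2}_{L^{q,\infty}}\|f\|_{L^{p_d',2}}$,
valid for every $q\in[q_m,\tfrac{d+1}{2}]$ at once: apply H\"older in Lorentz spaces, $\||V|^{1/2}g\|_{L^2}\le\||V|^{1/2}\|_{L^{2q,\infty}}\|g\|_{L^{r,2}}$ with $\tfrac1r=\tfrac12-\tfrac1{2q}$, and then the Lorentz--Sobolev embedding $S_{-\theta_{m,d}}:L^{p_d',2}\to L^{r,2}$, whose admissibility condition $\tfrac1{p_d'}-\tfrac1r\le\tfrac{\theta_{m,d}}{d}$ is precisely $q\ge q_m$ (the constraint $r\ge p_d'$ is precisely $q\le\tfrac{d+1}{2}$). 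That is, the derivative gain $\theta_{m,d}$ must be spent inside the $L^{p_d',2}$ scale, not in the $W^2_{\pm m}$ scale. This one bound, together with its dual, gives $S_{-\theta_{m,d}}VS_{-\theta_{m,d}}:L^{p_d',2}\to L^{p_d,2}$, hence $V\in\mathcal B(X^*,X)$ and $|V|^{1/2}\in\mathcal B(X^*,L^2)$ for all $q$ in the stated range, and it also supplies the constant $C\|V-V_0\|_{L^{q,\infty}}$ that your approximation argument for \eqref{De-1} requires.
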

\begin{proof} The symmetry follows since  $V$ is real-valued.

Following Lemma 6.1 in \cite{I-S}, it is easy to prove that for any $\ f\in L^{p_d',2}(\mathbb R^d), $
  \begin{align}
     \||V|^{1/2}S_{-\theta_{m,d}}f\|_{L^2(\mathbb R^d)}\leq C\|V\|_{L^{q_0,\infty}(\mathbb R^d)}^{1/2}\|f\|_{L^{p_d', 2}(\mathbb R^d)}.
  \end{align}
This implies that $V$ maps  from $X^*$ to $X$.

Finally, for any $V$ that satisfies  \eqref{potential type 1}, 
 we can find  $V_n \in C_c^\infty(\mathbb R^d)$ such that $V_n$ converge to $V$ in the
 ${L^{q,\infty}(\mathbb R^d)}$ norm. So condition \eqref{P3} can be easily deduced.
\end{proof}
Next, we construct a function which shows that $L^{q,\infty}_0$ is not contained in $L^q$.
\begin{example} Let $q\in[1,\infty)$ and let $\{E_j\}_{j=1}^\infty$ be a disjoint family of subsets of $\mathbb R^d$, each with measure $|E_j(x)|=\frac{1}{\ln(1+j)}$ and define
 $V(x)=\sum_{j=1}^\infty \frac{1}{j^{1/q}}{\mathbf{1}_{E_j}}(x).$ A direct computation shows that
$V(x)\in L^{q,\infty}(\mathbb R^d)$, but $V(x) \notin L^{q}(\mathbb R^d)$.

Now we prove that it can be approximated by a bump function in $L^{q, \infty}$ norm. Let us define $V_N(x)=\sum_{j=1}^{N-1} \frac{1}{j^{1/q}}{\mathbf{1}_{E_j}}(x)$.  In fact, for any given $\varepsilon>0$, we can choose $N$ large enough so that (writing $\lfloor t\rfloor$ for the integer part of $t\ge 0$,)
\begin{align*}
 \sup_{\lambda \in (0, \infty)} \lambda   \left|\left\{x\in\mathbb R^d\,\middle|\, |V(x)-V_N(x)|>\lambda \right\}\right|^{1/q}&\leq  \sup_{\lambda \in (0, N^{-1/q}]}\lambda \left(\sum_{j=N}^{\lfloor 1/\lambda^{q} \rfloor}\frac1{\ln(1+j)}\right)^{1/q}\\
 & \leq  \frac{\sup_{\lambda\in(0,N^{-1/q}]} \lambda \lfloor \frac1{\lambda^q}\rfloor ^{1/q}}{\ln(2+N)^{1/q}} <\frac{\varepsilon}{2}.
\end{align*}
Since $V_N \in L^{q}(\mathbb R^d)$ is compactly supported, there exists $\tilde V_N \in C_0^\infty $ such that $\|V_N-\tilde V_N\|_{L^{q}(\mathbb R^d)}<\frac{\varepsilon}{2}.$ It follows \begin{align*}
    \|V-\tilde V_N\|_{L^{{q}, \infty}(\mathbb R^d)}&\leq   \|V- V_N\|_{L^{{q}, \infty}(\mathbb R^d)}+  \|V_N-\tilde V_N\|_{L^{{q}, \infty}(\mathbb R^d)}\\
    &\leq \frac{\varepsilon}{2} +\frac{\varepsilon}{2} =\varepsilon.
\end{align*}
\end{example}

For the rest of this paper, we focus on proving Theorem \ref{Main theorem 2}.
\begin{proof}[Proof of Theorem \ref{Main theorem 2}]
For the first part of the theorem, it is standard that $H_m=P_m(D) +V$ is self-adjoint on $D(H_m)\coloneqq\{u\in W_m^2(\mathbb R^d): H_m u\in L^2(\mathbb R^d)\}$; this follows from the proof of Theorem 1.3 in \cite{I-S}.

Now, let us consider the limiting absorption principle for
 $H_m$. Since we already know that $H_m$ is  self-adjoint on $D(H_m)$, let us temporarily define three apparently distinct sets
 \begin{align*}
  \mathcal E &= \{ \text{non-zero eigenvalues of } H_m\},\\
  \widetilde  {\mathcal E}^{\pm}  &=\{\lambda \in \mathbb R\setminus \{0\}\ |\  \text{there exists}\  0\neq u\in X^* \  \text {such that } u+R^m_0 (\lambda\pm i0)V u=0\}.
\end{align*}
Notice that $R_0(\lambda+i0)\bar{f}=\overline{R_0(\lambda-i0)f}$ for any $f\in X$. Thus, we have $\widetilde  {\mathcal E}^-=\widetilde  {\mathcal E}^+=:\widetilde  {\mathcal E}$.
For any $\lambda\neq 0$, we denote the generalized eigenspaces by
\begin{align*}
  E_\lambda^{\pm}:=\{f\in X^*:(I+R_0(\lambda\pm i0)V)f=0\}.
\end{align*}
\begin{lemma}\label{Le-finite}
  Let $V$ be an admissible potential. Then:

  $(i)$ For any $\lambda\in \widetilde  {\mathcal E}$,
  $ E_\lambda^{\pm}$ are the eigenspaces of self-adjoint operator $H_m$.

  $(ii)$ The set $ \widetilde  {\mathcal E}$ is discrete in $\R\setminus\{0\}$, and for any $\lambda\in \widetilde  {\mathcal E}$, the vector spaces $ E_\lambda^{\pm}$ are finite dimensional.
\end{lemma}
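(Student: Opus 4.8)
The plan is to show that for every $\lambda\in\widetilde{\mathcal E}$ the spaces $E_\lambda^{+}$ and $E_\lambda^{-}$ both coincide with the $L^2$-eigenspace $\{u\in D(H_m):H_m u=\lambda u\}$, and then to read off (ii) from the compactness of $V:X^*\to X$ recorded in Remark \ref{symmetric and compactness}.

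For the inclusion $E_\lambda^{\pm}\subseteq\{u\in D(H_m):H_m u=\lambda u\}$, take $f\in E_\lambda^{+}$, so that $f=-R_0^m(\lambda+i0)Vf$ with $f\in X^*$ (the case $E_\lambda^{-}$ is identical). The key is a bootstrap that combines the weighted estimate of Corollary \ref{Weighted uniform bound} with property \eqref{P2} of an admissible potential: writing $\mu=\mu_{N,\gamma}$ and conjugating, $\mu f=-(\mu R_0^m(\lambda+i0)\mu^{-1})(\mu Vf)$, hence by Corollary \ref{Weighted uniform bound} and \eqref{De-1},
\[
\|\mu_{N,\gamma}f\|_{X^*}\le C_{\delta,d}\,\|\mu_{N,\gamma}Vf\|_{X}\le C_{\delta,d}\bigl(\varepsilon\|\mu_{N,\gamma}f\|_{X^*}+A_{N,\varepsilon}\|f\mathbf{1}_{\{|x|\le R_{N,\varepsilon}\}}\|_{L^2}\bigr).
\]
Since $f\in X^*\hookrightarrow L^2_{\mathrm{loc}}$ by \eqref{embedding result}, the last term is finite; choosing $\varepsilon$ with $C_{\delta,d}\varepsilon<\tfrac12$ and absorbing yields $\|\mu_{N,\gamma}f\|_{X^*}\le C_{N}$ uniformly in $\gamma\in(0,1]$. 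Letting $\gamma\downarrow0$ (monotone convergence in the defining norms of $B^*$ and $L^{p_d',2}$) gives $(1+|x|^2)^N f\in X^*$ for every $N$, which forces $f\in L^2(\mathbb R^d)$ with rapid decay. Because $(P_m(D)-\lambda)R_0^m(\lambda+i0)=I$ on $X$ we get $(P_m(D)-\lambda)f=-Vf$ in $\mathcal S'$; since $Vf\in X\hookrightarrow W_{-m}^2$, elliptic regularity gives $f\in W_m^2$ and $H_m f=\lambda f\in L^2$, i.e.\ $f\in D(H_m)$. (This is where the Rellich theorem of \cite{A-H}, applied to the rapidly decaying $f$, enters in pinning down the regularity of eigenfunctions.) Conversely, if $H_m f=\lambda f$ with $f\in D(H_m)\subset W_m^2\hookrightarrow X^*$, then $Vf\in X$, and for $z=\lambda+i\varepsilon$ one has $(H_m-z)f=-i\varepsilon f$, so $f=-i\varepsilon R^m(z)f$; inserting the second resolvent identity $R^m(z)=R_0^m(z)-R_0^m(z)VR^m(z)$ gives $f+R_0^m(z)Vf=-i\varepsilon R_0^m(z)f$. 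Testing against $g\in\mathcal S$, the right-hand side equals $-i\varepsilon\langle f,R_0^m(\bar z)g\rangle\to0$ since $R_0^m(\bar z)g$ converges in $\mathcal S'$, while $R_0^m(z)Vf\to R_0^m(\lambda+i0)Vf$ by Theorem \ref{Main theorem -1}; hence $f+R_0^m(\lambda+i0)Vf=0$, i.e.\ $f\in E_\lambda^{+}$, and likewise $f\in E_\lambda^{-}$. This proves (i), and in particular $E_\lambda^{+}=E_\lambda^{-}$.

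For (ii), finite dimensionality is immediate: by Remark \ref{symmetric and compactness} the operator $V:X^*\to X$ is compact and $R_0^m(\lambda\pm i0):X\to X^*$ is bounded by Theorem \ref{Main theorem -1}, so $R_0^m(\lambda\pm i0)V$ is compact on $X^*$ and $E_\lambda^{\pm}=\ker\bigl(I+R_0^m(\lambda\pm i0)V\bigr)$ is finite dimensional. For discreteness, suppose $\lambda_n\to\lambda_0\in\mathbb R\setminus\{0\}$ with distinct $\lambda_n\in\widetilde{\mathcal E}$, and fix $\delta$ with $\{\lambda_n\}\cup\{\lambda_0\}\subset[\delta,\delta^{-1}]$; pick $f_n\in E_{\lambda_n}^{+}$ with $\|f_n\|_{L^2}=1$. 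By (i) the $f_n$ are $L^2$-eigenfunctions of the self-adjoint $H_m$ at distinct eigenvalues, hence pairwise orthogonal in $L^2$. The bootstrap above is uniform over $[\delta,\delta^{-1}]$, so $\|(1+|x|^2)^N f_n\|_{X^*}\le C_N$ and in particular $\|f_n\|_{X^*}\le C$, while $(P_m(D)-\lambda_n)f_n=-Vf_n$ with $\{Vf_n\}$ bounded in $X\hookrightarrow W_{-m}^2$ forces $\{f_n\}$ bounded in $W_m^2$; by Rellich--Kondrachov together with the uniform decay, $\{f_n\}$ is precompact in $L^2(\mathbb R^d)$, so $f_n\to f$ in $L^2$ along a subsequence with $\|f\|_{L^2}=1$. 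But $\langle f_n,f\rangle_{L^2}=\lim_m\langle f_n,f_m\rangle_{L^2}=0$ for each $n$, whence $\|f\|_{L^2}^2=\lim_n\langle f_n,f\rangle_{L^2}=0$ — a contradiction. Thus $\widetilde{\mathcal E}$ is discrete in $\mathbb R\setminus\{0\}$.

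The main obstacle is the bootstrap in (i): it must be run with the \emph{same} truncated weight $\mu_{N,\gamma}$ in Corollary \ref{Weighted uniform bound} and in \eqref{De-1} before sending $\gamma\downarrow0$ — which is precisely what the weights $\mu_{N,\gamma}$ were designed for — and one must then handle the $\mathcal S'$-limiting argument and the elliptic-regularity upgrade to $D(H_m)$ carefully, all uniformly in $\lambda$; the discreteness argument in turn hinges on having that decay estimate uniform over compact $\lambda$-intervals.
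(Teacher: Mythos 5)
Your proof of part (i) in the forward direction (every element of $E_\lambda^{\pm}$ is an $L^2$-eigenfunction of $H_m$) is essentially the paper's own argument: the same bootstrap combining Corollary \ref{Weighted uniform bound} with property \eqref{P2}, absorption for small $\varepsilon$, the limit $\gamma\downarrow 0$ to get rapid decay, and elliptic regularity to land in $D(H_m)$. Where you genuinely diverge is in the reverse inclusion and in part (ii). The paper does not prove the reverse inclusion inside this lemma at all: it is deferred to the later proposition $\mathcal E=\widetilde{\mathcal E}$, whose proof goes through Lemma \ref{weighted estimates}, i.e.\ the Agmon--H\"ormander Rellich theorem. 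You instead write $f=-i\varepsilon R^m(\lambda+i\varepsilon)f$, apply the second resolvent identity, and pass to the limit using Theorem \ref{Main theorem -1}; this works and avoids the Rellich theorem, but your justification that $-i\varepsilon\langle f,R_0^m(\bar z)g\rangle\to 0$ ``because $R_0^m(\bar z)g$ converges in $\mathcal S'$'' is not sufficient ($f$ is not Schwartz and $R_0^m(\lambda\mp i0)g$ need not be in $L^2$); the correct and easy fix is Plancherel with dominated convergence, $\|\varepsilon R_0^m(\lambda\pm i\varepsilon)f\|_{L^2}^2=\int \varepsilon^2\bigl((|\xi|^{2m}-\lambda)^2+\varepsilon^2\bigr)^{-1}|\hat f(\xi)|^2\,d\xi\to 0$ for $f\in L^2$. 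For (ii) the paper simply invokes the argument of Lemma 4.5 in \cite{I-S}, using compactness of $R_0^m(-1)(1+|x|^2)^{-1}$ and $R_0^m(-1)V$ on $W_m^2$; your self-contained route --- finite dimensionality from compactness of $V:X^*\to X$ (Remark \ref{symmetric and compactness}) hence of $R_0^m(\lambda\pm i0)V$ on $X^*$, and discreteness from orthogonality of eigenfunctions plus the $\lambda$-uniform weighted bootstrap, Rellich--Kondrachov and tightness --- is a legitimate alternative and arguably more transparent. One point to make explicit: you quote Corollary \ref{Weighted uniform bound}, which is stated only for $\lambda\in[\delta,\delta^{-1}]$, so for $\lambda\in\widetilde{\mathcal E}$ negative (or a negative accumulation point) you need the much easier uniform weighted $W_{-m}^2\to W_m^2$ bound for the then nonsingular resolvent; the paper's own use of the corollary has the same unstated caveat.
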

\begin{proof}
  $(i)$ For any $u\in E_\lambda^{\pm}$, by the definition of $\widetilde {\mathcal E}$, we have
	\begin{align}\label{singular equation}
		u+R^m_0(\lambda\pm i0)Vu=0.
	\end{align}
 Now, we only need to show that $u\in D(H_m)$ and $H_m u=\lambda u$.	
 Note that $(P_m(D)-\lambda) R^m_0(\lambda\pm i0)g=g$ for any $g\in X$.
Using this fact and \eqref{singular equation},  we have
\begin{align*}
	(P_m(D)-\lambda)u+(P_m(D)-\lambda) R^m_0(\lambda\pm i0)Vu=0,
\end{align*}
which is equivalent  to
\begin{align} \label{differential equation }
	(P_m(D)+V)u=\lambda u.
\end{align}
	By Corollary \ref{Weighted uniform bound}, we have that
	\begin{align*}
	\|\mu_{N,\gamma} u\|_{X^*}&=\|\mu_{N,\gamma}R^m_0(\lambda\pm i0)Vu \|_{X^*}
	\\
	&\leq C_{N}\|\mu_{N,\gamma} Vu\|_{X}\\
	&\leq C_{N} \varepsilon \|\mu_{N,\gamma} u\|_{X^*}+C_{N} A_{N,\varepsilon}\|u{ \mathbf{ 1}}_{|x|\leq R}\|_{L^2}.
\end{align*}
We can take $\varepsilon\ll1 $ so that $C_{N} \varepsilon <1/2 $.  Then
\begin{align*}
	\|\mu_{N,\gamma} u\|_{X^*}\leq C_{N,\gamma}\|u\|_{B^*}.
\end{align*}
Taking $\gamma \to 0$ yields
\begin{align*}
	\|(1+|x|^2)^{N} u\|_{X^*}\leq C_{N,V, \gamma }\|u\|_{X^*}.
\end{align*}
This shows that $u $ has sufficiently fast decay.
In particular, for any $N\geq0$, we have $(1+|x|^2)^N u \in W_{m}^2$, and $u\in  W_m^2$. Using \eqref{differential equation } and the regularity of $u$, we have shown that $u\in D(H_m)$, which implies that $u$ is the eigenfunction corresponding to eigenvalue $\lambda$ of $H$.

 $(ii)$  This follows from the argument of Lemma 4.5 in \cite{I-S}, using the compactness of the operators $R_0^m(-1)(1+|x|^2)^{-1}$ and $R_0^m(-1)V$ on $W_m^2$.
\end{proof}

The above lemma implies that $\widetilde  {\mathcal E}\subset \mathcal E$. Next, we will prove that the reverse inclusion also holds.
 We first consider the equation
\begin{align}\label{differential equation}
    (P_m(D)-\lambda)u=f.
\end{align}
 The Rellich theorem established by S. Agmon and L. H\"ormander in  \cite{A-H} asserts that if $u \in B^*$, $f\in B$ and
\begin{align}\label{Rellich theorem condition}
	\lim_{R\to \infty}\frac{1}{R}\int_{|x|\leq R} |u|^2dx =0,
\end{align}
then there is a unique solution given by $u=R_0^m(\lambda +i0)f=R_0^m(\lambda -i0)f$.
The following lemma extends this result from $B^*$ to $X^*$.
\begin{lemma}\label{weighted estimates}
Let $u \in X^*$ and $\lambda \in \mathbb R\setminus \{0\}$ safisfy \eqref{differential equation} where $f\in X$. If we further assume that $u$ satisfies \eqref{Rellich theorem condition}, then we have
\begin{align}\label{solution representation}
  u=R_0^m(\lambda +i0)f=R_0^m(\lambda -i0)f
\end{align}
In particular, by Corollary \eqref{Weighted uniform bound}, we have
\begin{align}\label{Le6-5}
    \|\mu_{N,\gamma}u\|_{X^*}\leq C_N\|\mu_{N,\gamma}(P_m(D)-\lambda)u\|_{X}
  \end{align}

\end{lemma}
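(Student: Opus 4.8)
The plan is to reduce the statement about solutions in $X^*$ to the Agmon--H\"ormander Rellich theorem, which is stated only for the smaller space $B^*$, by first upgrading the decay of $u$. First I would observe that since $X^* \hookrightarrow W^2_{m,\mathrm{loc}}$ by \eqref{embedding result}, and since $X^* \hookrightarrow B^*$ fails in general, the idea is \emph{not} to apply the Rellich theorem directly to $u$, but rather to show that $u$ automatically lies in $B^*$ (indeed in a much better weighted space) using only \eqref{differential equation}, the hypothesis $f \in X$, and the qualitative decay \eqref{Rellich theorem condition}. The key tool is Corollary \ref{Weighted uniform bound} together with the fact that $R_0^m(\lambda\pm i0)(P_m(D)-\lambda)$ acts as the identity on an appropriate class of functions.

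The main steps would be as follows. Step one: show $u \in B^*$. Since $f \in X \hookrightarrow W^2_{-m}$ and $u \in X^* \hookrightarrow W^2_{m,\mathrm{loc}}$, one splits $u = \chi_\lambda(D) u + (1-\chi_\lambda)(D) u$ using the cutoff from the proof of Proposition \ref{main proposition}. For the nonsingular part, $(1-\chi_\lambda)(D) u = (1-\chi_\lambda)(D) R_0^m(\lambda\pm i0)(P_m(D)-\lambda)u = (1-\chi_\lambda)(D)R_0^m(\lambda\pm i0)f$ by the multiplier identity, which lies in $W^2_m \hookrightarrow X^* \hookrightarrow B^*$ by \eqref{weighted nonsingular part}-type bounds. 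For the singular part, one notes $\chi_\lambda(D)u \in L^2$ locally and, via the local smoothing already encoded in the structure of $X^*$, that it lies in $B^*$; combined with \eqref{Rellich theorem condition} this lets us invoke the Agmon--H\"ormander Rellich theorem to conclude $u = R_0^m(\lambda+i0)f = R_0^m(\lambda-i0)f$. Step two: iterate. Having \eqref{solution representation}, apply $\mu_{N,\gamma}$ on both sides and use Corollary \ref{Weighted uniform bound} to get
\begin{align*}
  \|\mu_{N,\gamma} u\|_{X^*} = \|\mu_{N,\gamma} R_0^m(\lambda\pm i0) f\|_{X^*} = \|\mu_{N,\gamma} R_0^m(\lambda\pm i0)\mu_{N,\gamma}^{-1} (\mu_{N,\gamma} f)\|_{X^*} \leq C_N \|\mu_{N,\gamma} f\|_{X},
\end{align*}
which is exactly \eqref{Le6-5} since $f = (P_m(D)-\lambda)u$.

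The delicate point — and the main obstacle — is Step one, establishing $u \in B^*$ for the singular (near-shell) frequency piece $\chi_\lambda(D) u$ purely from $u \in X^*$ and the averaged-decay condition \eqref{Rellich theorem condition}. The definition of $B^*$ demands a uniform bound on $\sup_{j} 2^{-j/2}\|u\|_{L^2(D_j)}$, and a generic element of $W^{p_d',2}_{\theta_{m,d}}$ need not satisfy this; the extra input must come from the hypothesis that $u$ solves the PDE with a right-hand side in $X$. The clean way around this is to run the argument of \cite{A-H} in the space $X^*$ directly: the Rellich-type uniqueness argument there is really a statement that any $u$ with $(P_m(D)-\lambda)u \in B$ and satisfying \eqref{Rellich theorem condition} has a representation by the boundary resolvent, and the proof only uses local $L^2$ regularity of $u$ plus an integration-by-parts/commutator identity on annuli — both available for $u \in X^* \hookrightarrow L^2_{\mathrm{loc}}$ with $f \in X \hookrightarrow B$ (since $X = W^{p_d,2}_{-\theta_{m,d}} + S_m(B)$ and on the shell the $W^{p_d,2}$ component is harmless after applying $\chi_\lambda(D)$, while the $S_m(B)$ component maps into $B$). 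Once one is comfortable that the Agmon--H\"ormander machinery applies verbatim with $B^*$ replaced by $X^*$ in the hypothesis (the conclusion still being the resolvent representation), Step two is immediate. I would therefore phrase the proof as: (a) reduce to showing $u$ is given by the boundary resolvent applied to $f$, citing \cite{A-H} with the observation that their proof uses only $u \in L^2_{\mathrm{loc}}$, $f \in B$, and \eqref{Rellich theorem condition} — all of which follow from $u \in X^*$, $f\in X$; (b) deduce \eqref{Le6-5} from Corollary \ref{Weighted uniform bound} and Lemma \ref{bounedness of weighted multipliers} by conjugating with $\mu_{N,\gamma}$.
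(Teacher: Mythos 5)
Your step (a) rests on the claim that $f\in X$ can be fed into the Agmon--H\"ormander Rellich theorem because ``$f\in X\hookrightarrow B$''; this is false, and it is exactly the point the lemma is designed to overcome. The space $X=W^{p_d,2}_{-\theta_{m,d}}+S_m(B)$ contains the Lorentz--Sobolev component precisely because it is \emph{not} contained in $B$ (for instance $B\subset L^2$, while $W^{p_d,2}_{-\theta_{m,d}}\not\subset L^2$), and applying $\chi_\lambda(D)$ does not make that component ``harmless'': frequency localization near the sphere buys local regularity, not the spatial decay that membership in $B$ requires, so $\chi_\lambda(D)W^{p_d,2}_{-\theta_{m,d}}\not\subset B$. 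The suggestion to ``run the Agmon--H\"ormander machinery verbatim'' with $X$ in place of $B$ is a restatement of the lemma, not a proof: the pairing and trace arguments in \cite{A-H} use $f\in B$ in an essential way. Incidentally, your preliminary worry is backwards: $X^*\subset S_{-m}(B^*)\subset B^*$ (the symbol $(1+|\xi|^2)^{-m/2}$ is a bounded multiplier on $B^*$), so your Step one is unnecessary --- the hypothesis side $u\in B^*$ is free; the genuine obstruction sits entirely on the right-hand side $f$.

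The paper's proof circumvents this as follows. It first produces a particular solution $u_0=\tfrac12\bigl[R_0^m(\lambda+i0)+R_0^m(\lambda-i0)\bigr]f$, which makes sense and lies in $X^*\subset B^*$ by Proposition \ref{main proposition}. Then $u-u_0\in B^*$ solves the homogeneous equation, so Theorem 4.1 of \cite{A-H} gives $u-u_0=\mathcal F^{-1}(v_0\,d\sigma_\lambda)$ with $v_0\in L^2(d\sigma_\lambda)$. Next, the quantitative estimate of Theorem 6.1 of \cite{A-H}, which bounds $\int_{\mathbb S^{d-1}_{r(\lambda)}}\bigl(|v_0|^2+|\pi\hat f|^2|P_m'|^{-2}\bigr)d\sigma_\lambda$ by $\pi\liminf_{R\to\infty}R^{-1}\int_{|x|<R}|u|^2dx$, is proved for $f\in\mathcal S$ and then extended to all $f\in X$ by density, using the continuity in the $X$ norm furnished by Proposition \ref{main proposition} together with the sharp trace lemma and the Stein--Tomas theorem --- this density-and-continuity step is where the passage from $B$ to $X$ actually takes place. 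Under \eqref{Rellich theorem condition} the right-hand side vanishes, so $v_0=0$ and $\hat f=0$ on $\mathbb S^{d-1}_{r(\lambda)}$, which yields \eqref{solution representation}. Your step (b), conjugating Corollary \ref{Weighted uniform bound} by $\mu_{N,\gamma}$ to obtain \eqref{Le6-5}, is correct and agrees with the paper; but without a correct substitute for the false embedding $X\hookrightarrow B$, your step (a) does not go through.
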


\begin{proof}
For any $f\in X$, a direct computation shows that
\begin{align}
  u_0=[R_0^m(\lambda+i0)+R_0^m(\lambda-i0)]f/2
\end{align}
is a special solution for \eqref{differential equation} and $u_0\in X^*\subset B^*$.
Thus, for any other solution of \eqref{differential equation} we have $(P_m(D)-\lambda)(u-u_0)=0$.  By Theorem 4.1 in \cite{A-H}, there exists an  $L^2$-density $v_0$ on surface $\mathbb{S}^{d-1}_{r(\lambda)}$ , such that  $u-u_0 =\mathcal F^{-1}(v_0 d\sigma_\lambda)$, where $d\sigma_\lambda$ is the surface measure for $\mathbb{S}^{d-1}_{r (\lambda)}$. That is,
\begin{align}
    u=[R_0^m(\lambda+i0)+R_0^m(\lambda-i0)]f/2+\mathcal F^{-1}(v_0 d\sigma_\lambda).
\end{align}
For $f\in \mathcal{S}$, by Theorem 6.1 in \cite{A-H},
\begin{align}\label{special solution}
    \int_{\mathbb{S}^{d-1}_{r(\lambda)}} (|v_0|^2+ |\pi \hat f|^2 |P_m'|^{-2}) d\sigma_\lambda \leq \pi \liminf_{R\to \infty}
    \frac{1}{R}\int_{|x|<R}  |u|^2 dx.
\end{align}
In view of Proposition \ref{main proposition} and the sharp trace lemma,
both sides of \eqref{special solution}
 are continuous  with respect to the $X$ norm of $f$  and the $L^2(d\sigma_\lambda)$ norm of $v_0$. As $\mathcal S $ is dense in $X$, it follows that the inequality \eqref{special solution} is valid for all $f\in X$.
 By assumption,  the right hand side is $0$, thus $v_0=0$ and $\hat f=0$ on $\mathbb S_{r(\lambda)}^{d-1}$. This shows that $u=R^m_0(\lambda+i0)f=R^m_0(\lambda-i0)f$.
\end{proof}

\begin{prop} $\mathcal E = \widetilde {\mathcal E} $.
\end{prop}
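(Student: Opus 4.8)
The plan is to establish the inclusion $\mathcal E \subset \widetilde{\mathcal E}$, since the reverse inclusion $\widetilde{\mathcal E}\subset\mathcal E$ has already been obtained in Lemma~\ref{Le-finite}(i). So I would fix $\lambda\in\mathcal E$, i.e. a nonzero eigenvalue of the self-adjoint operator $H_m$, and pick a corresponding eigenfunction $0\neq u\in D(H_m)$, so that $(P_m(D)+V)u=\lambda u$, equivalently $(P_m(D)-\lambda)u=-Vu$.

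Next I would check that $u$ and $f:=-Vu$ fit the hypotheses of Lemma~\ref{weighted estimates}. By the embedding \eqref{embedding result}, $u\in D(H_m)\subset W_m^2\hookrightarrow X^*$; since $V\in\mathcal B(X^*,X)$ by property~\eqref{P1} of an admissible perturbation, we get $f=-Vu\in X$. Moreover $u\in L^2(\mathbb R^d)$ because $D(H_m)\subset L^2(\mathbb R^d)$, so
\begin{align*}
\frac1R\int_{|x|\le R}|u|^2\,dx\ \le\ \frac1R\,\|u\|_{L^2}^2\ \longrightarrow\ 0\qquad(R\to\infty),
\end{align*}
which is precisely the Rellich-type condition \eqref{Rellich theorem condition}. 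Thus $u\in X^*$ solves $(P_m(D)-\lambda)u=f$ with $f\in X$ and satisfies \eqref{Rellich theorem condition}.

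Then Lemma~\ref{weighted estimates} gives $u=R_0^m(\lambda+i0)f=R_0^m(\lambda-i0)f=-R_0^m(\lambda\pm i0)Vu$, that is, $(I+R_0^m(\lambda\pm i0)V)u=0$ with $0\neq u\in X^*$. By the definition of $\widetilde{\mathcal E}^{\pm}$ this means $\lambda\in\widetilde{\mathcal E}^{+}=\widetilde{\mathcal E}^{-}=\widetilde{\mathcal E}$. Hence $\mathcal E\subset\widetilde{\mathcal E}$, and together with Lemma~\ref{Le-finite} we conclude $\mathcal E=\widetilde{\mathcal E}$.

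I expect no serious obstacle here: once Lemma~\ref{weighted estimates} (the extension of the Rellich uniqueness theorem of \cite{A-H} from $B^*$ to $X^*$) is available, the only things to verify are the membership $f=-Vu\in X$, which is immediate from admissibility, and the averaged decay \eqref{Rellich theorem condition}, which is automatic for an $L^2$ eigenfunction. The substance of the statement has therefore already been carried by Lemma~\ref{weighted estimates} and Corollary~\ref{Weighted uniform bound}, so this proposition is essentially a bookkeeping step tying together the two temporarily distinct sets.
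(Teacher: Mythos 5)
Your argument is correct and is essentially identical to the paper's proof: both reduce to showing $\mathcal E\subset\widetilde{\mathcal E}$ via Lemma~\ref{Le-finite}, note that an $L^2$ eigenfunction automatically satisfies the averaged decay condition \eqref{Rellich theorem condition}, and then apply Lemma~\ref{weighted estimates} to get $u=-R_0^m(\lambda\pm i0)Vu$. Your additional verifications that $u\in W_m^2\hookrightarrow X^*$ and $f=-Vu\in X$ are correct details the paper leaves implicit.
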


\begin{proof} By Lemma \ref{Le-finite}, we only need to prove that $\mathcal E \subset \widetilde {\mathcal E}$. Take any $\lambda \in \mathcal E $, and $0 \neq u \in D(H_m) \subset X^*$ such that $(P_m(D)-\lambda) u=-Vu: =f \in X$. In particular, $u\in L^2(\mathbb R^d)$, which implies that
\begin{align}
	\lim_{R\to \infty}\frac{1}{R}\int_{|x|\leq R} |u|^2dx =0.
\end{align}
 By Lemma \ref{weighted estimates},  we have $u=R_0^m(\lambda \pm i0)f=-R_0^m(\lambda \pm i0) Vu$, thus $\lambda \in \tilde {\mathcal E} $.
Hence, $\widetilde {\mathcal E}\subset \mathcal E,$ which proves the proposition.
\end{proof}

Above results imply that the non-zero eigenvalues $\mathcal{E}$ is discrete in $\R\setminus\{0\}$ and each eigenvalue has finite multiplicity.

Next, we prove the second part of Theorem \ref{Main theorem 2}.
 The following lemma gives locally uniform bounds for the inverse operators, which we use to establish the limiting absorption principle for the perturbed operator.

\begin{lemma}\label{uniform bounded in X^*}
For any compact set $I$ contained in
$\mathbb R\setminus (\{0\}\cup {\mathcal E})$,
  \begin{align}\label{uniform lemma}
    \sup_{\lambda\in I, 0\leq \varepsilon\leq 1}\left\|({\rm{Id}}_{X^*}+R^m_0(\lambda\pm i \varepsilon )V)^{-1}\right\|_{X^*\rightarrow X^*}\leq C(V,I).
  \end{align}
\end{lemma}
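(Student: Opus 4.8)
The plan is to run the standard Fredholm--compactness argument used by Ionescu and Schlag in \cite{I-S}, adapted to the pair $(X,X^*)$. The first observation is that for each \emph{individual} pair $(\lambda,\varepsilon)$ with $\lambda\in I$ and $\varepsilon\in[0,1]$, the operator $\mathrm{Id}_{X^*}+R_0^m(\lambda\pm i\varepsilon)V$ is invertible on $X^*$: since $R_0^m(\lambda\pm i\varepsilon)V$ is compact on $X^*$ (Remark \ref{symmetric and compactness}), by the Fredholm alternative it suffices to rule out a nonzero $u\in X^*$ with $u+R_0^m(\lambda\pm i\varepsilon)Vu=0$. When $\varepsilon>0$ (or $\lambda\le 0$), $z:=\lambda\pm i\varepsilon\notin[0,\infty)=\sigma(P_m(D))$; applying $P_m(D)-z$ shows that $u=-R_0^m(z)Vu$ lies in $W_m^2\subset D(H_m)$ (as $Vu\in X$ and $R_0^m(z)$ gains $2m$ derivatives) and solves $(H_m-z)u=0$, so $u=0$ by self-adjointness of $H_m$. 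When $\varepsilon=0$, the identity $\mathcal E=\widetilde{\mathcal E}$ established above together with $I\cap\mathcal E=\emptyset$ rules out such a $u$. Thus \eqref{uniform lemma} is exactly the assertion that these inverses are \emph{uniformly} bounded.

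Suppose not. Then there are $\lambda_n\in I$, $\varepsilon_n\in[0,1]$ and $u_n\in X^*$ with $\|u_n\|_{X^*}=1$ and $f_n:=(\mathrm{Id}_{X^*}+R_0^m(\lambda_n\pm i\varepsilon_n)V)u_n\to 0$ in $X^*$. Passing to subsequences, $\lambda_n\to\lambda_0\in I$ and $\varepsilon_n\to\varepsilon_0\in[0,1]$; since $V\in\mathcal B(X^*,X)$ is compact we may also assume $Vu_n\to g$ in $X$; and, $X$ being separable, after a further subsequence $u_n$ converges weakly-$*$ in $X^*$ to some $u_0$. Writing $u_n=f_n-R_0^m(\lambda_n\pm i\varepsilon_n)(Vu_n-g)-R_0^m(\lambda_n\pm i\varepsilon_n)g$ and using $\sup_{\lambda\in I,\ \varepsilon\in[0,1]}\|R_0^m(\lambda\pm i\varepsilon)\|_{X\to X^*}<\infty$ from Theorem \ref{Main theorem -1} (plus the elementary bound for $\lambda\le 0$), the first two terms tend to $0$ in $X^*$. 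By the symmetry of $V$ on $X^*$ (Remark \ref{symmetric and compactness}), for every $w\in X^*$ we get $\langle g,w\rangle=\lim_n\langle Vu_n,w\rangle=\lim_n\langle u_n,Vw\rangle=\langle u_0,Vw\rangle=\langle Vu_0,w\rangle$, so $g=Vu_0$; in particular if $u_0=0$ then $g=0$, and then $u_n=f_n-R_0^m(\lambda_n\pm i\varepsilon_n)Vu_n\to 0$ in $X^*$, contradicting $\|u_n\|_{X^*}=1$. Hence $u_0\neq 0$.

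To close the argument we identify the equation satisfied by $u_0$. Applying $P_m(D)-(\lambda_n\pm i\varepsilon_n)$ to $u_n+R_0^m(\lambda_n\pm i\varepsilon_n)Vu_n=f_n$ and letting $n\to\infty$ in $\mathcal S'(\mathbb R^d)$ gives $(P_m(D)-(\lambda_0\pm i\varepsilon_0))u_0=-Vu_0$. If $\varepsilon_0>0$, then $u_0=-R_0^m(\lambda_0\pm i\varepsilon_0)Vu_0\in D(H_m)$ is an eigenfunction of $H_m$ with non-real eigenvalue $\lambda_0\pm i\varepsilon_0$ — impossible. If $\varepsilon_0=0$, it remains to show $u_0+R_0^m(\lambda_0\pm i0)Vu_0=0$, since this would place $\lambda_0$ into $\widetilde{\mathcal E}^{\pm}=\widetilde{\mathcal E}=\mathcal E$, contradicting $\lambda_0\in I$. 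The content here is that the weak-$*$ limit in $X^*$ of $R_0^m(\lambda_n\pm i\varepsilon_n)g$ is $R_0^m(\lambda_0\pm i0)g$ — i.e.\ continuity of the resolvent up to the boundary (in the strong-operator/weak-$*$ sense), together with the limit $\varepsilon\downarrow 0$; this is the one genuinely delicate point. I would extract it from the explicit decomposition in the proof of Proposition \ref{main proposition}: modulo a Schwartz remainder depending continuously on $\lambda$, $R_0^m(\lambda\pm i\varepsilon)$ acts through the kernel $K_\lambda^{\pm}$ (and its $\varepsilon>0$ analogue), which depends continuously on $\lambda$ with the relevant bounds uniform down to $\varepsilon=0$; hence one gets convergence on the dense subspace $\mathcal S\subset X$ and, by the uniform bound of Theorem \ref{Main theorem -1}, on all of $X$. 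Alternatively, since $u_0\in X^*\subset B^*$ solves $(P_m(D)-\lambda_0)u_0=-Vu_0\in X\subset B$ and inherits the outgoing/incoming radiation condition from $\varepsilon_n\downarrow 0$, one may instead appeal to the Rellich-type Lemma \ref{weighted estimates}. Apart from this identification, the argument is the textbook Fredholm scheme and follows \cite{I-S} essentially verbatim.
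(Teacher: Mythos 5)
Your argument is correct and is essentially the paper's: the paper gives no proof of Lemma \ref{uniform bounded in X^*} beyond deferring to the standard Fredholm--compactness scheme of \cite{I-S}, and your pointwise invertibility plus contradiction/weak-$*$ compactness argument, using compactness of $V:X^*\to X$, the uniform bound of Theorem \ref{Main theorem -1}, and $\mathcal E=\widetilde{\mathcal E}$, is exactly that scheme. For the delicate continuity point at $\varepsilon_0=0$, your primary route (convergence $\langle R_0^m(\lambda_n\pm i\varepsilon_n)g,h\rangle\to\langle R_0^m(\lambda_0\pm i0)g,h\rangle$ on Schwartz functions via Privalov/the kernel decomposition, then density plus the uniform $X\to X^*$ bound) is the right one; the alternative appeal to Lemma \ref{weighted estimates} is not available as stated, since $u_0\in X^*\subset B^*$ is not known to satisfy the vanishing condition \eqref{Rellich theorem condition}.
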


The proof for Lemma  \ref{uniform bounded in X^*} is standard, the interested reader can check \cite{I-S} for more details.

\label{subsection-proof-main-theorem-2}
For $\lambda \in I$, using fact the resolvent identity
\begin{align}
  R^m(\lambda\pm i\varepsilon )=({\rm{Id}}_{X^*}+R^m_0(\lambda\pm i \varepsilon )V)^{-1}R_0^m(\lambda\pm i\varepsilon ),
\end{align}
together with Theorem \ref{Main theorem -1} and  Lemma \ref{uniform bounded in X^*}, we can get the estimate \eqref{LAP-V}.
From \eqref{LAP-V} and Theorem XIII.20 in \cite{Reed-Simon4}, we see that the singular continuous spectrum is null, i.e. $\sigma_{sc}(H_m)=\varnothing$.

Next, we prove that $\sigma_{ac}(H_m)\subset[0,\infty)$. Assume that $\lambda \in (-\infty, 0)\setminus \mathcal{E}$, we can show that $\lambda$ belongs to the resolvent set $\rho(H_m)$ of $H_m$ as follows.
Since $\lambda $ is not an eigenvalue of $H_m$, the equation $f+R^m_0(\lambda\pm i\varepsilon  ) Vf=0 $ has no solution in $W_m^2$.
Hence, by the Fredholm alternative, ${\rm{Id}}_{W_m^2}+R_0^m(\lambda) V$ is invertible on $W_m^2$.
But now, just notice that $[{\rm{Id}}_{W_m^2}+R_0^m(\lambda) V]^{-1}R_0^m(\lambda)(H_m -\lambda)$ is the identity map, and so is $(H_m -\lambda)[{\rm{Id}}_{W_m^2}+R_0^m(\lambda) V]^{-1}$, which is to say that $\lambda$ belongs to the resolvent set of $H_m$ as claimed.

In addition, using condition \eqref{P3} of an admissible perturbation and the limiting absorption principle for both $P_m(D)$ and $H_m$,
it is routine to check that the operators $A_j, B_j$ are both $H_m$-smooth and $P_m(D)$-smooth on compact subsets  $I\subset \mathbb R\setminus (\{0\}\cup \mathcal E)$. Hence, the local wave operators $\Omega^{\pm}$ exist and are complete. By Lemma \ref{Le-finite}, $(0, \infty)\setminus \mathcal E= \cup_{i=1}^\infty (a_i, b_i)$. That the local wave operators exist and are complete implies in turn that $(a_i, b_i)\subset \sigma_{ac}(H_m)$. Therefore, we in fact have $[0,\infty)\subset \overline{\bigcup_{i=1}^\infty (a_i, b_i)}\subset \sigma_{ac}(H_m)$. In conclusion, we have proven that $\sigma_{ac}(H_m)=[0, \infty)$, which finishes the proof of Theorem \ref{Main theorem 2}. 
\end{proof}

\end{document}